\newtheorem{theorem}{Theorem}
\newtheorem{corollary}[theorem]{Corollary}
\newtheorem{definition}[theorem]{Definition}
\newtheorem{lemma}[theorem]{Lemma}
\newtheorem{proposition}[theorem]{Proposition}
\theoremstyle{remark}
\newtheorem{remark}[theorem]{Remark}
 \renewcommand{\phi}{\varphi}
 \newcommand{\OB}{Ob{\l}{\'o}j} 
\newcommand{\HL}{Henry-Labord{\`e}re}
\newcommand{\E}{\mathbb{E}}
\renewcommand{\P}{\mathbb{P}}
\newcommand{\R}{\mathbb{R}}
\newcommand{\be}{\begin{equation}}
\newcommand{\ee}{\end{equation}}
\DeclareMathOperator{\supp}{supp}
\numberwithin{equation}{section}
\numberwithin{theorem}{section}
\begin{document}

\begin{frontmatter}

\title{Optimal Brownian Stopping between radially symmetric marginals in general dimensions}
\runtitle{Optimal Skorokhod Embedding between radially symmetric marginals}


\thankstext{}{The two first-named authors are partially supported by  the 
Natural Sciences and Engineering Research Council of Canada (NSERC). Y. H. Kim is also supported by an Alfred P. Sloan Research Fellowship. T. Lim was partly supported by a doctoral graduate fellowship from the University of British Columbia, by the Austrian Science Foundation (FWF) through grant Y782, and by the European Research Council under the European Union's Seventh Framework Programme (FP7/2007-2013) / ERC grant agreement no. 335421. Part of this research was done while the authors  were visiting the Fields institute in Toronto during the thematic program on ``Calculus of Variations'' in Fall 2014. We are thankful for the hospitality and the great research environment that the institute provided. 
}

\begin{aug}
\author{\fnms{Nassif} \snm{Ghoussoub}\thanksref{m1}
\ead[label=e1]{nassif@math.ubc.ca}}
\author{\fnms{Young-Heon} \snm{Kim}\thanksref{m1}
\ead[label=e2]{yhkim@math.ubc.ca,}}
\and
\author{\fnms{Tongseok} \snm{Lim}\thanksref{m2}
\ead[label=e3]{Tongseok.Lim@maths.ox.ac.uk}}

\runauthor{N. Ghoussoub, Y-H Kim and T. Lim}

\affiliation{The University of British Columbia\thanksmark{m1} and the University of Oxford\thanksmark{m2}}

\address{Department of Mathematics\\University of British Columbia\\ Vancouver, V6T 1Z2 Canada\\
\printead{e1}\\
\phantom{E-mail:\ }\printead*{e2}}

\address{Mathematical Institute\\University of Oxford\\
Woodstock Road, Oxford, OX2 6GG UK\\
\printead{e3}}
\end{aug}





\begin{abstract}
Given an initial (resp., terminal) probability measure $\mu$ (resp., $\nu$) on $\R^d$, we characterize those optimal stopping times $\tau$ that  maximize  or minimize the functional $\E |B_0 - B_\tau|^{\alpha}$, $\alpha > 0$, where $(B_t)_t$ is Brownian motion with initial law $B_0\sim \mu$ and with final distribution --once stopped at $\tau$-- equal to $B_\tau\sim \nu$.
 The existence of such stopping times is guaranteed by Skorohod-type embeddings of probability measures in {\em ``subharmonic order,"} into Brownian motion. This problem is equivalent to an optimal mass transport problem with certain constraints, namely {\em the optimal subharmonic martingale transport}. Under the assumption of radial symmetry  on $\mu$ and $\nu$, we show that the optimal stopping time is a hitting time of a suitable barrier, hence is non-randomized and is unique.
\end{abstract}

\begin{keyword}[class=MSC]
\kwd[Primary ]{49-XX}
\kwd{60-XX}
\kwd[; secondary ]{52-XX}
\end{keyword}

\begin{keyword}
\kwd{Optimal Transport, Skorokhod Embedding, Monotonicity, Radial Symmetry.}
\end{keyword}
\tableofcontents
\end{frontmatter}





\section{Introduction}\label{intro} 


Let  $\mu$ and $\nu$ be two probability measures on $\R^d$, $d \geq 2$ with finite first moment, and let $(B_t)_t$ denote a Brownian motion with initial law $\mu$.  
We consider the following --possibly empty-- set of stopping times, with respect to the Brownian filtration:
\begin{align*} 
{\cal T}(\mu, \nu) = \{\tau \,|\, \text{$\tau$ is a stopping time, } B_0 \sim \mu, B_\tau \sim \nu, \text{ and } \E[\tau] < \infty \}, 
\end{align*} 
where here and in the sequel, the notation $X\sim \lambda$ means that the law of the random variable $X$ is the probability measure $\lambda$.  

For a cost function $c: \R^d \times \R^d \to \R$, we shall consider  the following  optimization problem
\begin{align}\label{opt}
 \text{ Maximize / Minimize }\ \E\, [c(B_0, B_\tau)] \quad \text{over} \quad \tau \in {\cal T}(\mu, \nu),
\end{align}  
provided of course ${\cal T}(\mu, \nu)$ is non-empty. 

Recall that a {\em stopping time} on a filtered probability space $(\Omega, \mathcal F, (\mathcal F_t)_t,\mathbb P)$ is a random variable $\tau :\Omega \to [0, +\infty]$ such that $\{\tau \leq t\}\in \mathcal F_t$ for every $t\geq 0$. A {\em randomized stopping time} is a probability measure $\tau$ on $\Omega \times [0, +\infty]$ such that for each $u \in \R_+$, 
the random time 
$\rho_u (\omega) :=\inf\{ t\ge 0: \tau_{\omega} ([0,t]) \ge u\}
$
 is a stopping time, where here $(\tau_{\omega)_w}$ is a disintegration of $\tau$ along the path ${\omega}$ according to $\P$, that is  $\tau(d\omega,  dt) = \tau_{\omega} (dt) P(d\omega)$. In the sequel, ``stopping time" will mean a randomized stopping time unless stated otherwise. We note that a stopping time is {\em non-randomized} if the disintegration $\tau_{\omega}$ is the Dirac measure on $\R_+$ for $\P$ a.e. $\omega$. 

In this paper, we will focus on cost functions of the form
\begin{align}\label{shcost}
c(x,y) = |x-y|^\alpha,
\end{align} 
where $0 < \alpha \neq 2$, though most results could apply to more general 
cost functions of the form 
$ c(x,y) = f(|x-y|),$
where $f : \R_+ \to \R$ is a continuous function such that $f(0)=0$. 
The purpose of this paper is to identify and characterize such  optimal stopping times, that is those where (\ref{opt}) is attained. In particular, we will be investigating when optimal stopping times are `true" stopping times, as opposed to randomized, and are therefore unique.


But first, we recall that the Skorokhod Embedding Problem (SEP) --which essentially asks for which pairs $(\mu, \nu)$, the set ${\cal T}(\mu, \nu)$ is non-empty-- was introduced by Skorokhod \cite{Sk65} in the early 1960s. Since then, the problem and its variants was investigated by a large number of researchers, and has  led to several important results in  
probability theory and stochastic processes. We refer to {\OB} \cite{Ob04} for an excellent survey of the subject, which describes no less than 
 $21$ solutions to (SEP). 
 
 More recently,  Hobson \cite{Ho98} made the connection between SEP and the robust pricing and hedging of financial instruments. Also,  Hobson-Klimmek \cite{HoKl12} and Hobson-Neuberger \cite{HoNe11} connected it to finding robust price bounds for the forward starting straddle. See also the excellent survey  of Hobson  \cite{Ho11}. 
 
The interest therefore shifted to the problem of finding optimal solutions among those that solve (SEP). In other words, which among the stopping times in  ${\cal T}(\mu, \nu)$ maximize or minimize a given cost function. 
 Among the multitude of contributions to these questions, we point to the papers of Beiglb{\"o}ck-Cox-Huesmann \cite{bch}, Cox-\OB-Touzi \cite{COT}, Dolinsky-Soner \cite{ds1, ds2}, Guo-Tan-Touzi \cite{GTT}, K{\"a}llblad-Tan-Touzi \cite{KTT}, to name just a few.
We do single out, however, the recent work of Beiglb{\"o}ck-Cox-Huesmann \cite{bch}, which used the analogy betweem the optimal SEP and the theory of optimal mass transport, 
to identify and prove the so-called {\em Monotonicity Principle} (MP), which will be one of the main tools used in this paper.

We note that most of the articles and surveys mentioned above deal only with the case when the marginals are measures on the real line. 
The case where the underlying spaces are higher dimensional is more delicate. This was illustrated in our paper  \cite{GKL2}, where we deal with general martingale mass transport. This paper deals with the somewhat related {\it optimal stopping problems} in higher dimensions.
What makes the study of SEP more involved in higher dimension is the fact that as soon as $d\geq 2$, there are many natural notions of convexity that are compatible with 
 higher dimensional Brownian motion, such as {\it subharmonicity and plurisubharmonicity}, notions that are more general and sometimes more subtle than convexity.

The optimization problem (\ref{opt}) obviously makes no sense unless ${\cal T}(\mu, \nu)$ is nonempty. Since $\{f(B_t), t\ge 0\}$ is a submartingale
for any subharmonic function $f$ on $\R^d$, we have that $\E f(B_0) \le \E f(B_\tau)$, and therefore 
\begin{equation}\label{order.0}
\int f \,d\mu \le \int f \,d\nu \hbox{ for every subharmonic function $f$},
\end{equation}
 which clearly makes it a necessary condition for ${\cal T}(\mu, \nu)$ to be nonempty. 
That this condition is also sufficient, has been the subject of many studies starting with the original work of Skorokhod in the one-dimensional case. We also refer to the work of Monroe \cite{Mo72}, Rost \cite{Ro71}, Chacon-Walsh \cite{ChWa76}, Falkner \cite{Fa80} and many others for refinements and different proofs. 

In Section 2, we shall give yet another proof of this fact, and show that every one-step {\it subharmonic martingale} $(X, Y)$, i.e.,  any couple of random variables verifying 
\begin{equation}
f(X) \le \E [ f(Y)|X] \quad \hbox{for every $f$ subharmonic,}
\end{equation}
can be realized by stopping Brownian motion, that is, there exists a --possibly randomized-- stopping time $\tau$ such that 
\begin{equation}
\hbox{$(B_0, B_{\tau})$ and $(X, Y)$  have the same joint distribution.}
\end{equation}
We shall use this fact in Section 3, to prove the following duality result:

Denote by ${\cal SH}(O)$ the cone of subharmonic functions on the open set $O$, and  consider the following cone of functions on $O\times O$.
\begin{align*}
 \mathcal{P}(O\times O) = \{ p \in C(\overline{O\times O})\ |\,  &  p(x,x) = 0 
 \text{ and}\, p(x, \cdot)\in {\cal SH}(O)  
  \text{ for all $x \in O$
  }\}.
\end{align*}
\begin{theorem}
\label{thm: no gap SEP}
Assume that $\mu, \nu$ are two compactly supported probability measures satisfying (\ref{order.0}),  such that $\supp \mu \cup \supp \nu$ is contained in an open set $O$.
If $c$ is a continuous cost on $O\times O$, then the following duality holds:
\begin{align*}
 \inf\left\{ \E\, [c(B_0, B_\tau)]; \, \tau \in {\cal T}(\mu, \nu)\right\}=\sup\left\{\int_O \beta d\nu - \int_O \alpha d\mu; \, (\beta, \alpha) \in {\cal K}_c(O)   \right\},
\end{align*}  
where
\begin{align*}
{\cal K}_c(O):= &\left\{\hbox{$\alpha, \beta: O \to \R$ locally Lipschitz, such that there is  $p \in \mathcal{P}(O\times O)$}\right. \\
  & \qquad \qquad \qquad \hbox{with } \ \beta (y) - \alpha(x) +  p(x, y)  \le c( x, y)  \ \forall x, y \in O \Big\}. 
\end{align*}
\end{theorem}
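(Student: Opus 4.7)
My plan is to establish weak duality ($\ge$) and strong duality ($\le$) separately.

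\emph{Weak duality.} Fix $\tau \in {\cal T}(\mu,\nu)$ and $(\alpha,\beta) \in {\cal K}_c(O)$ with witness $p \in \mathcal{P}(O\times O)$. The pointwise inequality $\beta(y) - \alpha(x) + p(x,y) \le c(x,y)$ gives
\begin{equation*}
\E[c(B_0,B_\tau)] \ge \int_O \beta\,d\nu - \int_O \alpha\,d\mu + \E[p(B_0,B_\tau)].
\end{equation*}
Conditioning on $B_0 = x$, the process $t \mapsto p(x,B_t)$ is a continuous submartingale (while $B_t$ remains in $O$) thanks to the subharmonicity of $p(x,\cdot)$. Combining $\E[\tau] < \infty$ with the continuity and local boundedness of $p$ on a compact neighborhood of $\supp \mu \cup \supp \nu$, a suitable localization of the optional stopping theorem yields $\E[p(x,B_\tau) \mid B_0 = x] \ge p(x,x) = 0$. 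Integrating against $\mu$ gives $\E[p(B_0,B_\tau)] \ge 0$, and the inequality follows.

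\emph{Strong duality.} I first recast the primal as a constrained optimal transport problem. By the realization result promised in Section 2, the map $\tau \mapsto \law(B_0,B_\tau)$ induces a cost-preserving surjection from ${\cal T}(\mu,\nu)$ onto the set $\Pi(\mu,\nu)$ of probability measures $\pi$ on $\overline{O}\times\overline{O}$ with marginals $\mu,\nu$ satisfying the subharmonic martingale condition $\int p\,d\pi \ge 0$ for every $p \in \mathcal{P}(O\times O)$; hence the primal equals $\inf_{\pi \in \Pi(\mu,\nu)} \int c\,d\pi$. Introducing the Lagrangian
\begin{equation*}
L(\pi;\alpha,\beta,p) = \int\bigl(c + \alpha - \beta - p\bigr)\,d\pi + \int \beta\,d\nu - \int \alpha\,d\mu
\end{equation*}
over $\alpha,\beta \in C(\overline{O})$ and $p \in \mathcal{P}(O\times O)$, one checks $\sup_{\alpha,\beta,p} L(\pi;\cdot) = \int c\,d\pi$ when $\pi \in \Pi(\mu,\nu)$ (using positive homogeneity of the cone $\mathcal{P}$ together with $0 \in \mathcal{P}$) and $+\infty$ otherwise, so the primal equals $\inf_{\pi \ge 0}\sup L$. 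Swapping the order, the inner infimum over $\pi \ge 0$ collapses to $0$ precisely when $\beta(y) - \alpha(x) + p(x,y) \le c(x,y)$ pointwise, and to $-\infty$ otherwise, which reproduces the dual value.

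The main technical obstacle is rigorously justifying the inf--sup interchange and upgrading optimizers to locally Lipschitz. The former should follow from Fenchel--Rockafellar or Sion's min--max theorem: $\Pi(\mu,\nu)$ is weakly compact (compact supports), and $L$ is affine in $\pi$ and concave in $(\alpha,\beta,p)$. The delicate step is to verify that the convex cone
\begin{equation*}
\bigl\{\beta(y)-\alpha(x)+p(x,y) : \alpha,\beta \in C(\overline{O}),\, p \in \mathcal{P}(O\times O)\bigr\}
\end{equation*}
is closed enough in $C(\overline{O\times O})$ for Hahn--Banach separation to close the gap; this relies on stability of subharmonic functions under mollification and monotone limits on $O$. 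For the local Lipschitz regularity, I would apply the classical $c$-transform: smooth $p$ by a small nonnegative convolution in the second variable (preserving subharmonicity and continuity on $\overline{O \times O}$), then replace $\beta$ by $\tilde\beta(y) := \inf_x [\alpha(x) + c(x,y) - p(x,y)]$ and symmetrically for $\alpha$. The resulting conjugate pair remains feasible and inherits the local modulus of continuity of $c$, giving local Lipschitzness on every compact subset of $O$.
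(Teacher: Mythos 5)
Your proof follows essentially the same route as the paper: establish weak duality by a positivity argument, then reduce the Skorokhod problem to the subharmonic martingale optimal transport (SMOT) problem via the Section~2 realization (the paper's Corollary~\ref{same}), and finally deduce the duality for SMOT by a min--max interchange. Two small differences in execution are worth noting. First, for weak duality you argue directly on Brownian paths via the optional stopping theorem applied to the submartingale $t \mapsto p(x,B_t)$; the paper instead deduces $\int p\,d\pi \ge 0$ from the defining property of SMT plans. Both are fine, and yours has the virtue of not needing the SMOT reformulation for this half. Second, for strong duality you write a single Lagrangian with all three multipliers $(\alpha,\beta,p)$ and propose to swap $\inf_{\pi \ge 0}$ with $\sup_{\alpha,\beta,p}$ in one step. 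As you correctly flag, this is the delicate point: the set $\{\pi \ge 0\}$ is a non-compact cone in $C(\overline{O\times O})^*$, so neither Sion nor Von~Neumann applies directly, and Fenchel--Rockafellar in this generality would require verifying a closedness/qualification condition for the cone $\{\beta(y)-\alpha(x)+p(x,y)\}$. The paper avoids this by first invoking classical Kantorovich duality for the cost $c - p$ to collapse the problem onto the compact convex set $\Gamma(\mu,\nu)$, and only then interchanging $\inf_{\pi \in \Gamma(\mu,\nu)}$ with $\sup_{p \in \mathcal{P}}$ via a min--max theorem; that ordering makes the compactness hypothesis manifest. If you restructure your argument along those lines, the gap you identified closes cleanly. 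Finally, your $c$-transform argument for upgrading $\alpha,\beta$ to locally Lipschitz is a correct and welcome addition; the paper's statement of $\mathcal{K}_c(O)$ requires locally Lipschitz potentials but does not spell out this regularization step.
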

In order to prove this duality, we use the fact that (\ref{opt}) is actually equivalent to the {\em optimal subharmonic martingale problem}. Indeed, recall first that 
{\em the optimal martingale problem} consists in the following:
\begin{align}\label{eq:MT problem}
\mbox{Minimize}\quad \text{Cost}[\pi] = \iint_{O\times O} c(x,y)d\pi(x,y)\quad\mbox{over}\quad \pi\in {\rm MT}(\mu,\nu)
\end{align}
where MT$(\mu,\nu)$  is the set of {\it martingale transport plans,} that is, the set of probabilities $\pi$ on $O\times O$ such that 
\begin{itemize}
\item $\pi$ has marginals $\mu$ and $\nu$.
 \item For each  $\pi\in$ MT$(\mu,\nu)$, its disintegration $(\pi_x)_{x\in \R^d}$  w.r.t. $\mu$ is such that the barycenter of $\pi_x$ is at $x$. In other words, for any convex function $f$ on $O\subset \R^d$,
\begin{align}\label{eq:Jensen}
g(x) \le \int_O g(y)\,d\pi_x (y). 
\end{align} 
\end{itemize}
Problem \eqref{eq:MT problem} has been extensively studied, especially in one dimension by Beiglb{\"o}ck-Juillet \cite{BeJu16}, Beiglb{\"o}ck-Nutz-Touzi \cite{BeNuTo17}  and more recently in higher dimension by Ghoussoub-Kim-Lim  \cite{GKL2}.  Also, for recent developments on higher-dimensional decomposition of martingale transports, see De March-Touzi \cite{DeTo17} and \OB-Siorpaes \cite{ObSi17}.

In our situation, we need to consider the class of {\em subharmonic martingale transport plans}, that is the class 
 ${\rm SMT_O}(\mu,\nu)$  of those $\pi \in {\rm MT}(\mu, \nu)$ such that the inequality \eqref{eq:Jensen} also holds for every subharmonic function $f$ on $O$. This leads us to the {\em subharmonic  martingale optimal transport (SMOT)} problem: 
\begin{align}\label{eq:SMT_O problem}\mbox{Minimize}\quad \text{Cost}[\pi] = \iint_{O\times O}c(x,y)d\pi(x,y)\quad\mbox{over}\, \pi\in {\rm SMT_O}(\mu,\nu).
\end{align}
Since every convex function is subharmonic, ${\rm SMT_O}(\mu,\nu) \subset {\rm MT}(\mu,\nu)$, and for $d=1$ these two sets are the same. However, for $d\ge 2$, the inclusion is strict and  problems \eqref{eq:MT problem} and \eqref{eq:SMT_O problem} are different. 
Indeed, 
if $\mu$ is the uniform measure on the unit sphere in $\R^d$, and  $\nu$ has half of its 
mass at the origin and the other half uniformly distributed on the sphere of radius 2, then clearly, MT$(\mu,\nu) \neq \emptyset$.
On the other hand, if we consider the subharmonic function $g(z) = \log |z|\,$, then
\begin{align*}
\int g(z) \,d\mu \,(z) > - \infty = \int g(z)\, d\nu \,(z),
\end{align*}
which means that $SMT_O(\mu,\nu) = \emptyset$.

However, just like the convex case in one-dimension, we shall be able to show that 
\begin{align}
 \inf\left\{ \E\, [c(B_0, B_\tau)]; \, \tau \in {\cal T}(\mu, \nu)\right\}=\inf\left\{\iint_{O\times O}c(x,y)d\pi;\, \pi\in {\rm SMT_O}(\mu,\nu)\right\}.
 \end{align}

We then address our main question, which is to show that the optimal stopping time in (\ref{opt}) is not randomized and that it is therefore unique. The general case is still elusive, though we shall show in a forthcoming paper \cite{GKL3} that this conclusion will hold for general marginals $\mu, \nu$ under suitable conditions. In this paper, we shall focus on the case when $\mu$ and $\nu$ are radially symmetric, which has its own interest. 

 \begin{theorem}\label{thm:main}
Suppose $\mu$, $\nu$ are radially symmetric and compactly supported probability measures on $\R^d$, $d\geq2$. Assume $\mu \wedge \nu =0$ and $\mu(\{0\}) =0$. Let $c(x,y) = |x-y|^\alpha$, where $0 < \alpha \neq 2$. Then, the solution for the optimization problem \eqref{opt} under the marginals $\mu, \nu$ is unique and is given by a non-randomized stopping time.
\end{theorem}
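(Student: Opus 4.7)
The plan is to combine the rotational symmetry of $\mu,\nu$ with the duality of Theorem \ref{thm: no gap SEP} and the Monotonicity Principle (MP) of \cite{bch}, and to reduce the problem to a one-dimensional analysis on the radial Bessel process $|B_t|$ in which the optimal coupling will be identified with a hitting time of a radial barrier. Hitting times being automatically non-randomized, both the stated realization and uniqueness will follow. The argument breaks into three stages: symmetrization, radial monotonicity, and realization.

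\textbf{Symmetrization.} Since $\mu,\nu$ are radial and $c(x,y)=|x-y|^\alpha$ is $O(d)$-invariant, averaging any optimal subharmonic martingale transport $\pi\in \mathrm{SMT}_O(\mu,\nu)$ under the Haar action of $O(d)$ yields another element of $\mathrm{SMT}_O(\mu,\nu)$ with the same cost. Hence I may assume $(R\otimes R)_\#\pi=\pi$ for every $R\in O(d)$, or equivalently select a rotation-invariant randomized optimal stopping time. This reduces the analysis of $\pi$ to that of its radial projection, parametrised by the pair $(|B_0|,|B_\tau|)$.

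\textbf{Radial monotonicity via MP.} Applying the Monotonicity Principle together with Theorem \ref{thm: no gap SEP}, I obtain dual optimizers $(\alpha,\beta,p)$ which, after symmetrization, depend only on $(|x|,|y|,\langle x,y\rangle)$ and satisfy
\begin{align*}
\beta(y)-\alpha(x)+p(x,y)=c(x,y)\qquad \pi\text{-a.e.}
\end{align*}
Averaging the cost over the angular fibres of $y$ produces an effective one-dimensional cost $\tilde c(r_0,r)$ on $\R_+\times\R_+$, and the MP restricts to a monotonicity condition for the radial projection of $\pi$ with respect to $\tilde c$. For $\alpha\neq 2$, strict concavity/convexity of $r\mapsto r^\alpha$ translates into a strict Spence--Mirrlees (twist) property of $\tilde c$, which forces the one-dimensional radial coupling to be of Root/Rost barrier type: there exists $\Gamma\subset \R_+\times \R_+$ such that $\tau=\inf\{t\ge 0:(|B_0|,|B_t|)\in\Gamma\}$ realizes the radial marginals $\mu_R,\nu_R$. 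Because this $\tau$ is a hitting time of the joint process $(|B_0|,|B_t|)$, it is non-randomized, and by the rotational invariance recovered in the symmetrization step it also realizes the $d$-dimensional coupling $\pi$.

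\textbf{Uniqueness and main obstacle.} Uniqueness of $\Gamma$ follows from the strict twist of $\tilde c$ and the classical uniqueness of barrier-type Skorokhod embeddings in one dimension, combined with the hypotheses $\mu\wedge\nu=0$ (excluding the trivial no-move solution) and $\mu(\{0\})=0$ (avoiding the Bessel singularity at the origin). The main obstacle I expect is the step from the $d$-dimensional MP to the effective one-dimensional twist property of $\tilde c$: the angular degrees of freedom are coupled with the subharmonicity constraints in a nontrivial way, and one must verify carefully that rotation-invariant swap perturbations are rich enough to rule out an optimal $\pi_x$ that spreads nontrivially within the angular fibres of a radius, instead of collapsing to a genuine hitting-time structure. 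Handling the low-regularity regime $\alpha\in(0,1)$, where $c$ is only H\"older near the diagonal, will likely require an additional truncation/approximation argument to justify the optimality conditions.
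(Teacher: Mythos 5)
Your symmetrization step is correct and matches the paper's informal use of the averaging operator $\Theta$, but the core of your argument --- the reduction to a one-dimensional cost $\tilde c(r_0,r)$ on $\R_+\times\R_+$ and a Root/Rost-type barrier for the radial process $(|B_0|,|B_t|)$ --- does not work and is not what the paper does. The optimal stopping time here is \emph{not} a hitting time of the pair $(|B_0|,|B_t|)$. If $\tau$ were of the form $\inf\{t:(|B_0|,|B_t|)\in\Gamma\}$, then conditionally on $B_0=x$ the stopping region $\{z:(|x|,|z|)\in\Gamma\}$ would be a union of full spheres centered at the origin; the angular position of $B_t$ relative to the start $x$ would play no role. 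That would force far too rigid a structure on the conditional law $\pi_x$, and in particular would not respect the cost $|x-y|^\alpha$, which genuinely depends on the relative angle $\langle x,y\rangle$, not only on $(|x|,|y|)$. In fact, the paper's proof produces a barrier that is explicitly angular: for each start $x$ and each target $y$ it defines $U_x^y=\{z:|z|=|y|,\ \langle x,y\rangle<\langle x,z\rangle\}$ (a spherical cap on the sphere of radius $|y|$, tilted toward $x$), and the optimal $\tau$ from $x$ is the first hitting time of the union $\mathcal{U}_x=\cup_y U_x^y$. These caps depend on $\langle x,z\rangle$, so they are not pre-images under $z\mapsto|z|$ of any subset of $\R_+$; your proposed reduction would erase exactly the information needed.

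There are two further issues with the route you sketch. First, the "effective cost $\tilde c(r_0,r)$ obtained by averaging over angular fibres" is ill-defined: the angular conditional distribution of $y$ given $(|x|,|y|)$ is part of the unknown and cannot be integrated out a priori; doing so begs the question you have correctly flagged as the \emph{main obstacle}. Second, you invoke attained dual optimizers $(\alpha,\beta,p)$ from Theorem \ref{thm: no gap SEP}, but that theorem is stated only as an equality of values (a supremum on the dual side), with no claim of dual attainment, so you cannot rely on the pointwise identity $\beta(y)-\alpha(x)+p(x,y)=c(x,y)$ $\pi$-a.e. The paper instead works entirely on the primal side: it uses the monotonicity principle of Beiglb\"ock--Cox--Huesmann and its radial refinement (Proposition \ref{rmonoprin}), combined with a variational lemma (Lemma \ref{key}) showing that $y\mapsto\nabla_{e_d}|x-y|^\alpha$ is strictly super/subharmonic on a half-space depending on whether $\alpha<2$ or $\alpha>2$. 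This sign information, propagated through Proposition \ref{consequence}, rules out "forbidden pairs" of stopped paths that would cross each other's caps $U_x^y$, and the absence of forbidden pairs then forces any stopping time concentrated on the monotone set $\Gamma$ to be non-randomized. That mechanism is what your 1D reduction is missing; filling the gap would require replacing the Bessel-barrier picture with the genuinely $d$-dimensional angular barrier and a sub/superharmonicity argument in the $y$ variable, as in the paper.
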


We note that for the minimization problem and when $0<\alpha\leq1$, the assumption that $\mu \wedge \nu=0$ will not be necessary.

\section{Spherical martingales and Skorohod embedding}

In this section, we shall prove the following result, which belongs to the family of results around Skorohod embeddings. If  $\mu, \nu$ are two probability measures supported in a domain $O$ of $\R^d$, we shall sometimes use the following notation reminescent of Choquet theory,
$$
\mu \prec_O \nu \quad\hbox{if \quad $\int_O f d\mu \le \int_O f d\nu$ for every $f\in {\cal SH}(O)$.}
$$

\begin{theorem}\label{prop1} 
Let $\mu, \nu$ be two compactly supported probability measures on $\R^d$, and let  $O$ be an open set containing $\supp \mu \cup \supp \nu$. Then, the following properties are equivalent:
\begin{enumerate}
\item $\mu \prec_O \nu$. 

\item There exists a subharmonic martingale plan $\pi \in SMT_O(\mu, \nu)$. 

\end{enumerate}
Moreover, for every subharmonic martingale plan $\pi \in SMT_O(\mu, \nu)$, there exists a stopping time $\tau$ and a $d$-dimensional Brownian motion $(B_t)_t$ such that ${\rm Law}(B_0) = \mu$, ${\rm Law}(B_\tau) = \nu$, and $\pi$ is the joint distribution of $(B_0, B_\tau)$.\\
 Furthermore, $\tau = \tau \wedge \kappa$, where $\kappa$ is the first exit time of $B_t$ from $O$.
In other words, ${\cal T}_O(\mu, \nu)$ is nonempty. 
\end{theorem}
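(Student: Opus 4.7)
The argument splits into three parts: (i) the easy direction (2) $\Rightarrow$ (1), (ii) the converse (1) $\Rightarrow$ (2) via a Strassen-type separation, and (iii) the Brownian realization of any $\pi \in SMT_O(\mu,\nu)$ together with the bound $\tau \le \kappa$. Part (i) is immediate from subharmonic Jensen: disintegrating $\pi(dx,dy)=\mu(dx)\pi_x(dy)$ and integrating $\int f(y)\,d\pi_x(y)\ge f(x)$ against $\mu$ yields $\int f\,d\nu \ge \int f\,d\mu$ for every $f\in{\cal SH}(O)$.

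For (ii), I would apply Hahn-Banach separation on the weak-$*$ compact convex set $\Pi(\mu,\nu)$ of probability measures on $\overline{O}\times\overline{O}$ with marginals $\mu,\nu$. The subharmonic martingale constraint translates into the family of linear inequalities $\iint (f(y)-f(x))h(x)\,d\pi \ge 0$, indexed by $f\in{\cal SH}(O)$ and nonnegative continuous $h$. If no $\pi\in\Pi(\mu,\nu)$ satisfies these, a min-max (or direct separation) argument produces continuous $\alpha,\beta$ and a kernel $p(x,y)$ subharmonic in $y$ with $p(x,x)=0$ such that $\beta(y)-\alpha(x)+p(x,y)\le 0$ pointwise while $\int\beta\,d\nu-\int\alpha\,d\mu>0$; combining this with admissible choices of marginals then yields a subharmonic $f_0$ with $\int f_0\,d\mu>\int f_0\,d\nu$, contradicting (1).

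For (iii), I would again disintegrate $\pi(dx,dy)=\mu(dx)\pi_x(dy)$, so that for $\mu$-a.e. $x$ the conditional $\pi_x$ is compactly supported in $O$ and satisfies $\delta_x\prec_O\pi_x$. For each such $x$, Rost's potential-theoretic embedding, or equivalently a Chacon-Walsh scheme carried out inside $O$, produces a Brownian motion $B^x$ starting at $x$ together with a (possibly randomized) stopping time $\tau^x$ with $B^x_{\tau^x}\sim\pi_x$ and $\tau^x\le\kappa^x$, the first exit of $B^x$ from $O$. Selecting these data measurably in $x$ via Kuratowski-Ryll-Nardzewski and sampling $B_0\sim\mu$ assembles a single Brownian filtration on an enlarged probability space with a stopping time $\tau$ realizing $(B_0,B_\tau)\sim\pi$ and $\tau\le\kappa$ by construction.

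The principal difficulty is the domain-restricted Skorokhod embedding: verifying that $\delta_x\prec_O\pi_x$ is \emph{exactly} the right condition permitting an embedding that stops before the path exits $O$. This is where the use of the full cone ${\cal SH}(O)$, rather than subharmonic functions on all of $\R^d$, is essential; test functions such as $-G_O(\cdot,z)$, where $G_O$ is the Green kernel of $O$ with pole at $z$, act as obstructions detecting mass that would otherwise have to be pushed across $\partial O$. Once the single-fiber embedding is established with the correct domain constraint, the measurable $x$-dependence and the passage from the family $(\tau^x)_x$ to a single stopping time on an enlarged filtered space are by now routine measurable-selection and concatenation arguments.
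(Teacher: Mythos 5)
Your plan is correct and broadly parallels the paper's, with two places where the implementation genuinely diverges. For the direction $(1)\Rightarrow(2)$, the paper does not separate on the space of couplings $\Pi(\mu,\nu)$; it instead runs Strassen's original scheme on the space of $\mathrm{Lip}(O)$-valued simple Borel functions with the sublinear functional built from the subharmonic envelope $\hat f$, extends by Hahn--Banach, and then invokes the Radon--Nikodym property of $\mathcal P(O)\subset\mathrm{Lip}(O)^*$ to extract a measurable disintegration $T\colon O\to\mathcal P(O)$ with $\delta_x\prec_O T(x)$. Your version, separating $\Pi(\mu,\nu)$ against the cone of constraints $\iint(f(y)-f(x))h(x)\,d\pi\ge 0$, can be made to work but you leave the crucial step vague: a single separating functional is a continuous function on $\overline O\times\overline O$, and extracting from it the structure $\beta(y)-\alpha(x)+p(x,y)$ with $p(x,\cdot)$ subharmonic is not automatic --- it is essentially the content of the duality theorem (the paper's Theorem on no-gap duality), which you would then be assuming rather than proving. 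So the paper's route is not just a stylistic preference; it avoids a circularity risk.

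For the Brownian realization and the bound $\tau\le\kappa$, the paper does not cite Rost or Chacon--Walsh. It introduces \emph{spherical martingales} (iterated uniform sphere hitting with radii constrained by $\{x+r\overline B\}\subset O$), proves a subharmonic-envelope lemma (Lemma on $f_n\downarrow\hat f$) and a density lemma (Lemma showing spherical-martingale images are weak-$*$ dense among terminal laws $\succ_O\mu$), then extracts a limit by tightness. The $\tau\le\kappa$ bound is then automatic, since every approximating spherical martingale never leaves $O$ by construction and the bound passes to the limit. Your invocation of Rost (or a Chacon--Walsh scheme run inside $O$) is a legitimate alternative, and your remark about $-G_O(\cdot,z)$ detecting mass that would cross $\partial O$ is exactly the right potential-theoretic intuition; but note that with this route the containment $\tau^x\le\kappa^x$ requires a genuine argument (that the potential of $\pi_x$ relative to $O$ dominates that of $\delta_x$, i.e.\ $\delta_x\prec_O\pi_x$ suffices), whereas in the paper it is built into the approximation scheme. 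The fiber-wise disintegration + measurable selection + concatenation you describe for the final assembly is the same as the paper's (the paper is equally terse there), so that part matches.

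In short: the skeleton (easy direction by Jensen, existence via separation, realization fiberwise through a domain-constrained embedding) is the same; the flesh differs in both nontrivial steps, and in step (ii) your sketch needs to be tightened to avoid tacitly importing the duality one is trying to prove.
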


\begin{remark} The same result holds if $\mu, \nu$ are not compactly supported but under the condition that they have finite second moments. In this case the condition that $\mu \prec_O \nu$ should be replaced by 
$$\int_{\R^d} f d\mu \le \int_{\R^d} f d\nu \text{ \,\,for every $f\in {\cal SH}(\R^d)$ with } f(x) \le C_f (1+|x|^2).$$
In this case,  ${\cal T}_O(\mu, \nu)$ is again nonempty, and  there exists a stopping time $\tau$ and a $d$-dimensional Brownian motion $(B_t)_t$ such that ${\rm Law}(B_0) = \mu$, ${\rm Law}(B_\tau) = \nu$, and $(B_{\tau \wedge t})_{t \ge 0}$ is an $L^2$-bounded martingale.
\end{remark}
To prove  that 1) implies 2), we shall use the following variant of a classical theorem of Strassen \cite{St65}.
Let ${\rm Lip}(O)$ be the Banach space of bounded and Lipschitz functions on $O$. 
The set ${\cal P}(O)$ consisting of all Borel probability
measures on $O$, will be identified with a closed bounded convex
subset of the dual space ${\rm Lip}(O)^*$.  
We shall prove the existence of  a measurable map  $T\colon \ O\to {\cal P}(O)$
 such that 
 \begin{enumerate}[i)]
 \item  $\langle \nu, f\rangle = \int \langle T(x), f\rangle
 d\mu(x)$ for all  functions $f \in {\rm Lip}(O)$. 
 \item  $\delta_x \prec_{_{{\rm SH}(O)}} \pi_x$ for $\mu$-almost all $x\in O$.
  \end{enumerate}
 For that, define for
 each $f\in {\rm Lip}(O)$ the function 
 $$\hat f (x) = \inf \{ \varphi (x); \, -
 \varphi \in {\cal SH}(O)\, \hbox{\rm and $\varphi \ge f$}\}.$$
  On the vector space $S$ of all simple Borel functions $\theta \colon \
 O\to {\rm Lip}(O)$, define the sublinear functional $p\colon S\to {\R}$
 by $p(\theta) = \int \theta (x)\hat (x) d\mu(x)$. Let $S_0$ be the subspace
 of $S$ generated by the functions of the form ${\cal X}_B\otimes f$ defined by
\begin{equation*}
 ({\cal X}_B\otimes f)(x) =\left\{ 
 \begin{array}{lll}
f &{\rm if}& x\in B\\
 0&{\rm if}&x\notin B
 \end{array}\right.
  \end{equation*}
  Define on $S_0$ the linear functional $\ell ({\pmb 1} \otimes f) = \langle \nu,f\rangle$. The order $\mu \prec_{O}\nu$ implies that $\ell \le p$ on  $S_0$. Let $\tilde \ell$ be any Hahn-Banach extension of $\ell$ to
 the whole space $S$.  One can then check that the vector measure
 $$m\colon \ \Sigma \to {\rm Lip}(O)^*$$ defined on the Borel $\sigma$-field
 $\sum$ of $O$ by $\langle m(A),h\rangle = \tilde\ell({\cal X}_A\otimes h)$
 for all $A\in \Sigma$ and $h\in C_b(O)$, has average range in ${\cal
 P}(O)\colon$ that is ${m(A)\over \mu(A)} \in {\cal P}(O)$ for all
 $A\in \sum$.  Since the latter has the Radon-Nikodym property \cite{Ed85,Ed86}, $m$ has a density
 $T\colon \ O\to {\cal P}(O)$.  It is easy to check that $T$ is valued
 in ${\cal P}(O)$ and that it satisfies i) and ii).\\
 Define now the probability $\pi$ on $O\times O$ by $\pi(D) =
 \int T(x)(D_x) d\mu (x)$ where $D_x = \{ y\in O; (x,y) \in D)\}$. It is easy to verify that $\pi$ is a subharmonic martingale $\pi \in {\rm SMT}_O(\mu, \nu)$.\\


\noindent For the second part of the theorem, we introduce the notion of {\em spherical martingales}.
\begin{definition}
Let $U$ be the uniform probability measure on the unit sphere in $\R^d$. Let $(X_i)^{\infty}_{i=1}$ be i.i.d random variables on some probability space $(\Omega, \P)$, whose distribution is $U$. We define {\em spherical martingales} as follows:
\begin{align}
&F_0 = x \in \R^d\\
&F_n (X_1,...,X_n) - F_{n-1} (X_1,...,X_{n-1}) = r_n (X_1,...,X_{n-1}) \cdot X_n.
\end{align}
\end{definition}
In other words, at each time $n$, a particle splits uniformly onto a surrounding sphere of radius $r_n$.  A simple but important observation is that, the  push-forward measure of $\delta_x$ by a spherical martingale $F_n$  has the same law as $B^x_{\tau}$, where the stopping time $\tau$ is defined as follows: \\
$\tau_1$ is the first time $B^x$ hits the sphere $S(x, r_1)$ centered at $x$ and with radius $r_1$. If $B_{\tau_1} (\omega) = x_1 \in S(x, r_1)$, then define $\tau_2$ to be the first hitting time  $B^{x_1}$ hits the sphere $S(x_1,r_2)$. One can then define inductively a sequence of stopping times $\tau_1 \leq \tau_2 \leq ...$ such that $F_n (\P) = {\rm Law}(B^x_{\tau_n})$.

The following lemma is an analogue of a result of Bu-Schachermayer \cite[Proposition 2.1]{BuSc92} in the case of spherical martingales.
\begin{lemma}\label{shenvelope}
Let $O$ be an open set in $\R^d$ and $f : O \rightarrow \R \cup \{-\infty\}$ be an upper semicontinuous function. Define $f_0 = f$ and for $n \geq 1$
\begin{align*}
f_n (x) = \text{inf } \bigg\{ \int f_{n-1} (x + r y) \,dU(y) \bigg\}
\end{align*}
where the infimum is taken over all $r \geq 0$ such that $\{ x + r \overline{B}\} \subset O$. Then $(f_n)^{\infty}_{n=0}$ decreases pointwise to the largest subharmonic function $\hat{f}$ on $O$ dominated by $f$.
\end{lemma}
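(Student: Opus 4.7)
The plan is to show that the pointwise limit $g := \lim_{n\to\infty} f_n$ exists, is subharmonic, is dominated by $f$, and dominates every subharmonic minorant of $f$; this characterizes $g$ as $\hat f$. First I would observe that $r=0$ is always admissible, and for this choice the spherical average reduces to $f_{n-1}(x)$. Hence $f_n(x)\leq f_{n-1}(x)$ pointwise, so the sequence is monotone non-increasing and the pointwise limit $g(x) := \lim_n f_n(x) \in [-\infty,\infty)$ is well defined, with $g\leq f_0 = f$.

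Next I would verify upper semicontinuity of each $f_n$ by induction. The base case $f_0=f$ is given. For the step, if $f_{n-1}$ is USC and locally bounded above (which propagates since $f_n\leq f_0=f$ is USC hence locally bounded above on compacts in $O$), then the map $(x,r)\mapsto \int f_{n-1}(x+ry)\,dU(y)$ is USC on the open set $\{(x,r):\overline B(x,r)\subset O\}$ by reverse Fatou. Taking the infimum over admissible $r\geq 0$ preserves USC at a point $x_0$, because for any $r_0$ nearly attaining $f_n(x_0)$ and any $x_k\to x_0$, one can pick $r_k\to r_0$ with $\overline B(x_k,r_k)\subset O$ for large $k$, and apply USC of the joint map. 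Consequently $g$, the decreasing limit of USC functions, is itself USC.

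Then I would establish that $g$ satisfies the spherical mean value inequality on $O$. Fix $x\in O$ and any $r\geq 0$ with $\overline B(x,r)\subset O$. By definition of $f_{n+1}$,
\begin{equation*}
f_{n+1}(x) \leq \int f_n(x+ry)\,dU(y).
\end{equation*}
Since $f_n\downarrow g$ and $f_n\leq f_0 = f$ is bounded above on the compact set $\overline B(x,r)$, reverse Fatou applied to the decreasing sequence $(f_n(x+r\cdot))_n$ gives $\limsup_n \int f_n(x+ry)\,dU(y) \leq \int g(x+ry)\,dU(y)$. Letting $n\to\infty$ yields $g(x)\leq \int g(x+ry)\,dU(y)$. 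Combined with USC, this shows $g$ is subharmonic on $O$; together with $g\leq f$, we get $g\leq \hat f$.

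For the reverse inequality, I would show by induction that every subharmonic $h:O\to\R\cup\{-\infty\}$ with $h\leq f$ satisfies $h\leq f_n$ for all $n$. The base case is the hypothesis $h\leq f_0$. For the inductive step, if $h\leq f_{n-1}$, then for each admissible $r$ the subharmonic mean value inequality for $h$ gives $h(x)\leq \int h(x+ry)\,dU(y)\leq \int f_{n-1}(x+ry)\,dU(y)$; taking the infimum over admissible $r$ yields $h(x)\leq f_n(x)$. Hence $h\leq g$, so $\hat f\leq g$, and $g=\hat f$. The main delicate point is the upper semicontinuity of the iterates $f_n$, which is needed both to make the spherical averages well behaved and to ensure that $g$ is USC (and hence genuinely subharmonic); everything else is a straightforward application of the mean value inequality and Fatou-type limits.
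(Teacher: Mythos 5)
Your proof is correct and follows essentially the same route as the paper's: establish monotonicity of the sequence, propagate upper semicontinuity inductively using reverse Fatou along a fixed admissible radius, show every subharmonic minorant $h \le f$ satisfies $h \le f_n$ by induction via the submean value inequality, and pass to the limit to obtain the submean value inequality for $\hat f$. The only cosmetic difference is that the paper invokes monotone convergence where you use reverse Fatou to transfer the submean inequality to the limit, and the paper fixes a single admissible radius $r$ rather than introducing the jointly USC map in $(x,r)$; these are equivalent in substance.
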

\begin{proof} First note that the sequence $(f_n)^{\infty}_{n=0}$ is decreasing. To show the upper-semicontinuity of $\hat{f}$, we proceed inductively and assume $f_{n-1}$ is upper semicontinuous. If $(x_k)^{\infty}_{k=0}$ in $O$ is such that $\lim_{k \rightarrow \infty} x_k = x_0$, and $r \geq 0$ is such that  $\{ x_0 + r \overline{B}\} \subset O$,  where $\overline{B}$ is the closed unit ball, then there is $k_0$ such that $\{ x_k + r \overline{B}\} \subset O$ for $k \geq k_0$. The upper semicontinuous function $f_{n-1}$ is bounded above on the relatively compact set 
$\cup^{\infty}_{k=k_0} \{ x_k + r \overline{B}\}$ and, for every $z \in \overline{B}$,
$f_{n-1} (x_0 + r z) \geq \limsup_{k \rightarrow \infty} f_{n-1} (x_k + r z)$. Hence by Fatou's lemma, 
\begin{align*}
 \int f_{n-1} (x_0 + r y) \,dU(y) \geq \limsup_{k \rightarrow \infty}  \int f_{n-1} (x_k + r y) \,dU(y) 
 \geq \limsup_{k \rightarrow \infty} f_n (x_k).
 \end{align*}
Thus $f_n (x_0) \geq \displaystyle\limsup_{k \rightarrow \infty} f_n(x_k)$, hence showing that $f_n$ and consequently $\hat f$ is upper-semicontinuous.\\
Let now $g$ be a subharmonic function on $O$ with $g \leq f$. Again, inductively, assuming  that $g \leq f_{n-1}$, then for $\{ x_0 + r \overline{B}\} \subset O$, 
\begin{align*}
\int f_{n-1} (x_0 + r y) \,dU(y) \geq  \int g (x_0 + r y) \,dU(y)\, \geq \,g(x_0),
\end{align*}
and so $f_n (x_0) \geq g (x_0)$. Hence $\hat{f} \geq g$. Finally, for $\{ x_0 + r \overline{B}\} \subset O$,  we get from monotone convergence
\begin{align*}
\hat{f} (x_0) = \lim_{n \rightarrow \infty} f_n (x_0) \leq \lim_{n \rightarrow \infty} \int f_{n-1} (x_0 + r y) \,dU(y) =  \int \hat{f} (x_0 + r y) \,dU(y).
\end{align*}
This shows that $\hat{f}$ is subharmonic, and the proof of the lemma is complete.
\end{proof}
We can now rewrite $f_n$ as follows:
\begin{align}\label{edgar}
f_n(x) = \inf \{\E [f(F_n)] : (F_i)^n_{i=0} \,\text{ is a spherical martingale in $O$ with } \, F_0 = x \}.
\end{align}

\begin{lemma}\label{denseness} Let $\mu, \nu$ be probabilities as in Theorem \ref{prop1} such that $\mu \prec_{O}\nu$, and let 
$\Phi$ be the following subset of $Lip^*(O)$, which is the space of all finite measures on $O$ with finite first moments,
\begin{align*}
\Phi = \{ F_n(\P) : (F_i)^n_{i=0} \,\hbox{is a spherical martingale valued in $O$ with $F_0\sim \mu$\}.}
\end{align*}
Set $\tilde \nu:= (1+|x|)\nu$ and 
$$
\tilde \Phi = (1+|x|)\Phi:= \{\tilde \sigma \,|\, \tilde \sigma = (1+|x|)\sigma \hbox{ for some  $\sigma \in \Phi$\}.}
$$
Then $\tilde \nu$ is in the weak$^*$-closure of $\tilde \Phi$ in $Lip^*(O)$.
\end{lemma}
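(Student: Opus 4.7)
The plan is to argue by Hahn-Banach separation inside the locally convex space ${\rm Lip}^*(O)$ equipped with the weak-$*$ topology, whose continuous dual is ${\rm Lip}(O)$. As a preliminary I would check that $\tilde\Phi$ is convex: given two terminal laws $\sigma_0, \sigma_1 \in \Phi$ obtained from spherical martingales $(F^{(0)}_i)$ and $(F^{(1)}_i)$ both starting from $\mu$, enlarge the probability space by an independent Bernoulli$(t)$ variable $\xi$ and set $F_i := F^{(\xi)}_i$ (padding the shorter process with vanishing radii). This is again a spherical martingale valued in $O$ with $F_0 \sim \mu$, and its terminal law is $t\sigma_0+(1-t)\sigma_1$. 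Multiplication by $1+|x|$ preserves convexity and is precisely the weight needed to place $\tilde\Phi$ inside ${\rm Lip}^*(O)$ when $O$ is unbounded; for the rest of the argument I implicitly assume $O$ bounded, which in the compactly-supported setting of the lemma can be arranged by intersecting with a large ball containing $\supp\mu \cup \supp\nu$.

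Assuming for contradiction that $\tilde\nu \notin \overline{\tilde\Phi}^{\,w^*}$, the geometric Hahn-Banach theorem produces $f \in {\rm Lip}(O)$ and $\alpha \in \R$ with
\begin{equation*}
\int_O f\,d\tilde\nu \,<\, \alpha \,\le\, \int_O f\,d\tilde\sigma \quad \text{for every } \tilde\sigma \in \tilde\Phi.
\end{equation*}
Setting $g(x) := f(x)(1+|x|)$, a continuous bounded function on $O$, this reads $\int g\,d\nu < \alpha \le \E[g(F_n)]$ for every $n$-step spherical martingale $(F_i)$ in $O$ with $F_0 \sim \mu$. Conditioning on $F_0 = x$ and choosing jointly measurable near-optimal radii $r_i(x, X_1, \dots, X_{i-1})$ by a standard measurable-selection argument, one obtains
\begin{equation*}
\inf_{(F_i):\,F_0 \sim \mu} \E[g(F_n)] \,=\, \int_O g_n(x)\,d\mu(x),
\end{equation*}
where $g_n$ is the $n$-th iterate from Lemma \ref{shenvelope}. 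In particular $\int_O g_n\,d\mu \ge \alpha$ for every $n$.

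The contradiction follows by passing $n \to \infty$. By Lemma \ref{shenvelope}, $g_n$ decreases pointwise to the largest subharmonic minorant $\hat g$ of $g$ on $O$; since $g$ is bounded, monotone convergence gives $\int_O g_n\,d\mu \to \int_O \hat g\,d\mu$, and therefore $\int_O \hat g\,d\mu \ge \alpha$. On the other hand, applying $\mu \prec_O \nu$ to the subharmonic function $\hat g$ together with the trivial bound $\hat g \le g$ yields
\begin{equation*}
\int_O \hat g\,d\mu \,\le\, \int_O \hat g\,d\nu \,\le\, \int_O g\,d\nu \,<\, \alpha,
\end{equation*}
contradicting $\int_O \hat g\,d\mu \ge \alpha$. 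I expect the main obstacle to be the measurable-selection step, which requires producing jointly measurable near-optimal radii while keeping the process inside $O$; the upper semicontinuity of $g_n$ from Lemma \ref{shenvelope} reduces it to a standard application of measurable-selection theorems, after which the remaining topological and convexity bookkeeping is routine.
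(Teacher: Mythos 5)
Your proof is correct and follows essentially the same route as the paper's: convexity of $\Phi$, Hahn--Banach separation in ${\rm Lip}^*(O)$, identification of the separation bound with the envelope iterates $g_n$ from Lemma~\ref{shenvelope} via \eqref{edgar}, and then passing to the subharmonic envelope $\hat g$ together with $\mu \prec_O \nu$. Two cosmetic remarks: an external Bernoulli$(t)$ variable does not literally fit Definition~\ref{symmetrization}'s notion of spherical martingale, so encode the coin flip in the padded sphere step $X_1$ (the paper does this with a hemisphere for $t=\tfrac12$; for general $t$ use a spherical cap of $U$-measure $t$), and the paper silently uses the measurable-selection step that you correctly flag when equating $\int g_n\,d\mu$ with $\inf_{F_0\sim\mu}\E[g(F_n)]$.
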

\begin{proof}
Observe first that $\Phi$ is convex. Indeed, if $(F_i')^n_{i=0}$ and $(F_i'')^m_{i=0}$ are two spherical martingales, we may assume $n=m$, then define a spherical martingale $(F_i)^{n+1}_{i=0}$ by letting $F_0 = F_1 \sim \mu$ and for $1 \leq i \leq n$,
\begin{align*}
F_{i+1} (X_1,X_2,...,X_{i+1}) = 
\begin{cases}
F_i' (X_2,...,X_{i+1}) \text{\, if \,$X_1$ is in the upper hemisphere,}\\
F_i'' (X_2,...,X_{i+1}) \text{\, if \,$X_1$ is in the lower hemisphere.}
\end{cases}
\end{align*}
Clearly $F_{n+1} (\P) = \{F_{n}' (\P) + F_{n}'' (\P)\} / 2$ and hence $\Phi$ is convex, and therefore $\tilde \Phi = (1+|x|)\Phi$ is convex in $Lip^*(O)$.

If now the statement of the lemma were false, then by the Hahn-Banach theorem we can find a Lipschitz function $f$ on $O$ and real numbers $a < b$ such that
\begin{align*}
\int g \,d\nu \leq a, \, \text{while}\,  \int  g \circ F_n \,d\P  \geq b \,\text{ for every $F_n(\P) \in \Phi$},
\end{align*}
where $g(x) = (1+|x|)f(x)$. But since every element in $\Phi$ has initial distribution $\mu$, then by Lemma \ref{shenvelope} and \eqref{edgar}, we have $\int \hat{g} \,d\mu \geq b$ and therefore $a \geq \int g \,d\nu \geq \int \hat{g} \,d\nu \geq \int \hat{g} \,d\mu \geq b$, which is a contradiction.
\end{proof}
\begin{proof}[\bf Proof of Theorem~\ref{prop1}]
By Theorem \ref{denseness}, we have a sequence $\{\nu_n\} \subset \Phi$ such that $\int |x|^2 d\nu_n(x) \rightarrow \int |x|^2 d\nu(x)$. We know that $\nu_n = {\rm Law}(B_{\tau_n})$ for a sequence of stopping times ${\tau_n}$ and a Brownian motion $B$ with initial law $\mu$.  Hence in particular $\E \tau_n = \E |B_{\tau_n}|^2 =  \int |x|^2 d\nu_n(x) \leq V$ for some constant $V$ and for all $n$. The sequence $({\tau_n})$ is then tight, and it is standard that it has a convergent subsequence to a --possibly randomized-- stopping time $\tau$ in such a way that $\E f(B_{\tau_k}) \rightarrow \E f(B_\tau)$ for every $f$ continuous and bounded function on $\R^d$. (see for example \cite{BaCh74} or  \cite{bch}). In other words, ${\rm Law}(B_{\tau_k}) \rightarrow {\rm Law}(B_\tau)$, and therefore, $B_\tau \sim \nu$.
Note that $\tau_k = \tau_k \wedge \kappa$ by the definition of $\Phi$, and therefore $\tau$ also satisfies $\tau= \tau \wedge \kappa$. 
 Notice also that $\E \tau = \E |B_{\tau}|^2 \leq V$ as well, hence the martingale $(B_{\tau \wedge t})_{t \ge 0}$ is bounded in $L^2$. 

 Let now $\pi$ be a Subharmonic martingale plan and let $(\pi_x)_x$ be its disintegration w.r.t. $\mu$. Let $(B_t)_t$ be a Brownian motion with $B_0 \sim \mu$. Since $\delta_x\prec_O \pi_x$ for $\mu$-almost all $x$, and noting that $B_0$ and $B_t - B_0$ are independent, one can apply the above to select measurably a stopping time $\tau$ such that given $B_0$, we have ${\rm Law}(B_\tau \in \cdot \,|\, B_0) = \pi_{B_0} (\cdot)$. It is then clear that ${\rm Law} (B_0, B_\tau) = \pi$. 
\end{proof}
The following is now immediate.

\begin{corollary} \label{same} Assume that $\mu, \nu$ are compactly supported, and that $O$ is an open set containing $\supp \mu \cup \supp \nu$. If $\mu \prec_{O}\nu$, then,
\begin{align*}
 \inf\left\{ \E\, [c(B_0, B_\tau)]; \, \tau \in {\cal T}(\mu, \nu)\right\}=\inf\left\{\iint_{O\times O}c(x,y)d\pi;\, \pi\in {\rm SMT}(\mu,\nu)\right\}.
 \end{align*}
\end{corollary}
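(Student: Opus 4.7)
The plan is to establish the two inequalities separately; Theorem~\ref{prop1} does most of the heavy lifting, providing the correspondence between stopping times and subharmonic martingale plans.

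For the easier inequality
\[
\inf\bigl\{\iint_{O\times O} c\,d\pi : \pi \in \mathrm{SMT}(\mu,\nu)\bigr\} \ge \inf\bigl\{\E[c(B_0,B_\tau)] : \tau \in \mathcal{T}(\mu,\nu)\bigr\},
\]
I would take any $\pi \in \mathrm{SMT}(\mu,\nu)$ and invoke the second half of Theorem~\ref{prop1} to produce a Brownian motion $(B_t)$ and a stopping time $\tau$ with $(B_0,B_\tau)\sim\pi$. The $L^2$-boundedness of $(B_{\tau\wedge t})_t$ supplied by that theorem, together with the identity $\E|B_{\tau\wedge t}|^2 = \E|B_0|^2 + d\,\E[\tau\wedge t]$, forces $\E\tau<\infty$, so $\tau \in \mathcal{T}(\mu,\nu)$. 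Since $\E[c(B_0,B_\tau)] = \iint c\,d\pi$, taking infima yields the inequality.

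For the reverse inequality, I would start from an arbitrary $\tau \in \mathcal{T}(\mu,\nu)$ and set $\pi := \mathrm{Law}(B_0,B_\tau)$, so that automatically $\iint c\,d\pi = \E[c(B_0,B_\tau)]$; it remains to check $\pi \in \mathrm{SMT}(\mu,\nu)$. The marginals of $\pi$ are correct by hypothesis, and the martingale property of the disintegration $(\pi_x)_x$ with respect to $\mu$ follows from optional stopping applied coordinate-wise to $(B_{\tau\wedge t})_t$, which is $L^2$-bounded because $\E|B_{\tau\wedge t}|^2 \le \E|B_0|^2 + d\,\E\tau < \infty$. For the subharmonic Jensen inequality \eqref{eq:Jensen}, I would test against a bounded continuous subharmonic $f$ on a neighborhood of $\supp\mu \cup \supp\nu$: then $f(B_{\tau\wedge t})$ is a bounded submartingale, so passing $t\to\infty$ via dominated convergence and conditioning on $B_0 = x$ gives $\E[f(B_\tau)\mid B_0 = x] \ge f(x)$ for $\mu$-a.e.\ $x$, which is precisely the inequality needed for $(\pi_x)_x$.

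The main obstacle is justifying that testing only against bounded continuous subharmonic functions suffices to conclude $\pi \in \mathrm{SMT}(\mu,\nu)$. Since a general subharmonic function is merely upper semicontinuous and may be unbounded (even $-\infty$ on a polar set), optional stopping cannot be applied directly. My proposed remedy is a standard approximation: exhaust an arbitrary subharmonic $f$ on $O$ by a decreasing sequence of bounded continuous subharmonic functions on slightly smaller open sets still containing $\supp\mu \cup \supp\nu$, apply the bounded version established above, and pass to the limit via monotone convergence, exploiting the compact support of $\mu$ and $\nu$ to control integrability at the endpoints.
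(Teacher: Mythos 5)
Your proof is structurally correct and follows the decomposition the paper leaves implicit when it calls the corollary ``immediate'': one inequality by converting subharmonic martingale plans into stopping times via Theorem~\ref{prop1}, the other by converting stopping times into plans via optional stopping. The first direction is handled cleanly, including the verification that $\E\tau<\infty$ from $L^2$-boundedness.

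The step that needs more care is the reverse direction, where you argue that $\pi := \mathrm{Law}(B_0,B_\tau)$ lies in $\mathrm{SMT}_O(\mu,\nu)$ for every $\tau\in\mathcal{T}(\mu,\nu)$. You assert that $f(B_{\tau\wedge t})$ is a submartingale when $f$ is (bounded, continuous) subharmonic on a neighborhood of $\supp\mu\cup\supp\nu$. This requires the Brownian path to remain, up to time $\tau$, inside the open set on which $f$ is subharmonic; but nothing in the definition of $\mathcal{T}(\mu,\nu)$ as stated forbids a stopping time under which the Brownian motion wanders far outside any such neighborhood --- indeed outside $O$ itself --- before stopping in $\supp\nu$. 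In that case $f$ is not even defined along the path, the submartingale property fails, and in general $\pi$ is guaranteed only to belong to $\mathrm{SMT}_{\R^d}(\mu,\nu)$, which is a strictly larger class than $\mathrm{SMT}_O(\mu,\nu)$ when $O\neq\R^d$ (a function subharmonic on $O$ need not extend subharmonically to $\R^d$). The resolution is the convention the paper adopts but does not restate in the corollary: Theorem~\ref{prop1} already produces stopping times with $\tau=\tau\wedge\kappa$ (so the result is phrased for $\mathcal{T}_O(\mu,\nu)$), and from Section~3 on the paper explicitly imposes $\tau=\tau\wedge\kappa$ for \emph{all} admissible stopping times, which confines the Brownian paths to $\bar O$. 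Under that convention your optional-stopping argument goes through, after the approximation you describe (mollify $f$ on a slightly larger domain containing $\bar O$, truncate from below, pass to the limit monotonically). You should make this confinement of the path to $O$ an explicit hypothesis; without it the claim ``$f(B_{\tau\wedge t})$ is a bounded submartingale'' is false as stated.
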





\section{Weak duality for the optimal Skorohod embedding problem}

From now on, we will assume that the prescribed marginals $\mu$ and $\nu$ are supported in a bounded open ball $O \subset \R^d$. Therefore every measure appearing in the sequel is also supported in $O$, and for every stopping time $\tau$,
\begin{align*}
\text{$\tau = \tau \wedge \kappa$, where $\kappa$ is the first exit time for Brownian motion from $\partial O$. }
\end{align*}
In other words, no one can escape the ``universe" $O$. This assumption is made due to the fact that we deal with the cost \eqref{shcost} for every $\alpha > 0$, and it should be sufficient to assume appropriate moment conditions on the marginals $\mu, \nu$ according to $\alpha$ in \eqref{shcost}. For example, if $\alpha \le 2$ then $\mu, \nu$ can only  be  assumed to have finite second moments and the same arguments will work without significant modification.

In order to prove Theorem \ref{thm: no gap SEP}, we first prove the weak duality for the subharmonic martingale optimal transport problem. For that, we start by characterizing such martingales in terms of their ``orthogonality" vis-a-vis the cone ${\cal P}(O\times O)$.
\begin{lemma}\label{lem: char P S}
 Suppose $\pi \in {\rm Prob}(O \times O)$ is such that its first marginal $\pi^1$ is absolutely continuous with respect to Lebesgue measure.  Then, $\pi$ is a subharmonic martingale transport plan on $O$ if and only if
 \begin{equation}\label{pos}
 \int_U p(x, y)  d\pi (x, y) \ge 0  \quad \forall p \in \mathcal{P}(O\times O) \text{ and for a.e. $x \in O$}.
 \end{equation}
\end{lemma}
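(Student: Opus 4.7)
My plan is to prove both directions by exploiting the structure of the cone $\mathcal{P}(O\times O)$, whose elements act as fiberwise witnesses of subharmonic dominance. The easy direction is essentially a reformulation of the definition of $\mathrm{SMT}_O(\mu,\nu)$. For the converse, the main step is to manufacture scalar test inequalities from vector-valued ones by tensoring with a suitable multiplier in the $x$-variable, and then to upgrade the resulting pointwise a.e.\ statement so that it holds uniformly in the test function.

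For the forward direction, let $\pi\in \mathrm{SMT}_O(\mu,\nu)$ and let $(\pi_x)_x$ denote its disintegration against $\mu = \pi^1$. By the definition of $\mathrm{SMT}_O$, for $\mu$-a.e.\ $x$ we have $\delta_x \prec_O \pi_x$, so $\int_O g\, d\pi_x \ge g(x)$ for every $g\in\mathcal{SH}(O)$. Applying this to $g = p(x,\cdot)$ for $p\in \mathcal{P}(O\times O)$ and using $p(x,x)=0$ gives
\[
\int_O p(x,y)\, d\pi_x(y) \ge p(x,x) = 0,
\]
which is exactly \eqref{pos} fiberwise.

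For the converse, the key device is to test with elements of $\mathcal{P}(O\times O)$ of the form
\[
p_{\phi}^{f}(x,y) := \phi(x)\bigl(f(y) - f(x)\bigr),
\]
where $f\in C(\overline{O})$ is subharmonic on $O$ and $\phi\in C(\overline{O})$ with $\phi\ge 0$. Such $p_{\phi}^{f}$ belongs to $\mathcal{P}(O\times O)$ since it is continuous on $\overline{O\times O}$, vanishes on the diagonal, and, for each fixed $x$, is an affine shift of a nonnegative multiple of $f$, hence subharmonic in $y$. Substituting into \eqref{pos} and integrating against $\mu$ yields
\[
\int_O \phi(x)\left[\int_O f(y)\, d\pi_x(y) - f(x)\right] d\mu(x) \ge 0.
\]
Since $\phi \ge 0$ is an arbitrary continuous function and $\mu$ is absolutely continuous, this forces $\int_O f\, d\pi_x \ge f(x)$ for $\mu$-a.e.\ $x$, with the exceptional null set depending on $f$.

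The two remaining tasks are (i) selecting a single $\mu$-null set $N$ outside of which the inequality holds for \emph{every} continuous subharmonic $f$, and (ii) extending the inequality to general upper semi-continuous subharmonic functions. For (i), the cone of continuous subharmonic functions on $\overline{O}$ is separable in the uniform topology; pick a countable dense family $\{f_n\}$ and let $N$ be the union of the corresponding null sets. For (ii), given any $g\in\mathcal{SH}(O)$, approximate $g$ on relatively compact subsets $O'\Subset O$ by a decreasing sequence of smooth subharmonic functions obtained by convolution with radial mollifiers, and pass to the limit by monotone convergence. The martingale condition comes for free: affine functions $\ell$ are harmonic, so both $\pm\ell$ are subharmonic, and applying the established dominance to each sign forces the barycenter identity $\int_O y\, d\pi_x(y) = x$, upgrading $\pi$ from a subharmonic measure to a true subharmonic martingale transport plan. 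The main obstacle I anticipate is the uniform-in-$f$ choice of null set, which is where the separability of $C(\overline{O})$ together with the absolute continuity of $\pi^1$ is essential; the USC-to-continuous approximation of subharmonic functions is classical but must be carried out carefully near $\partial O$.
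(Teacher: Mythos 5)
Your proposal is correct and takes essentially the same approach as the paper: the forward direction is identical, and the converse uses the same product test functions $\phi(x)\bigl(f(y)-f(x)\bigr)$, the only superficial difference being that the paper specializes $\phi$ to an approximate Dirac mass $\psi^z_\epsilon$ and reads off the fiberwise inequality at Lebesgue density points of $\pi^1$, whereas you keep $\phi$ a general nonnegative continuous multiplier and deduce the a.e.\ nonnegativity directly — the two are equivalent in the presence of absolute continuity. Your explicit handling of the uniformity of the exceptional null set over $f$ and of the extension from continuous to general upper semicontinuous subharmonic test functions addresses points the paper leaves implicit.
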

\begin{proof} Note first that if $\pi \in {\rm SMT}(O)$, then for any $p \in \mathcal{P}(O\times O)$ we have, thanks to the subharmonicity of  of $y \mapsto  p(x, y)$, that 
\begin{align*}
 \int p(x, y) d\pi (x, y) = \int p (x, y) d\pi_x (y) d\pi^1(x)  \ge  \int  p(x, x) d\pi^1 (x) =0.
 \end{align*}
 For the reverse, 
 we assume that (\ref{pos}) holds, and consider functions of the form 
\begin{align*}
 p(x, y) = \psi^z_\epsilon (x) (\phi (y) - \phi(x)), 
\end{align*}
   where $0\le \psi^z \in C^1 (O)$, $\psi^z_\epsilon \to \delta_z $ in $L^1$ as $\epsilon \to 0$, and $\phi$ is an arbitrary subharmonic function on $O$.  
Clearly $p(x, y) \in \mathcal{P}(O\times O)$, 
  therefore, 
\begin{align*}
 0 \le \int p(x, y) d\pi (x, y) = \int \psi^z_\epsilon (x) \left(\int \phi (y) d\pi_x (y)  - \phi (x) \right) d\pi^1(x). 
\end{align*}
Take $\epsilon \to 0$, and see that for each Lebesgue density point $z$ of both the measurable function 
$x \mapsto \int \phi (y) d\pi_x (y)  - \phi (x)$ and the measure $\pi^1$, 
\begin{align*}
 0 \le \int \phi (y) d\pi_z (y)  - \phi (z).
\end{align*}
Since $\pi^1$ is absolutely continuous, this shows that $\pi \in {\rm SMT}(O)$ as desired, completing the proof. 
\end{proof}
\noindent With this lemma at hand, we can prove the following duality by using a standard argument. 
\begin{proposition}
\label{thm: no gap HMT}
Assume that $\mu, \nu$ are compactly supported in a bounded open set $O$ such that $\mu$ is absolutely continuous with respect to Lebesgue measure and $\mu\prec_O\nu$.
If $c$ is a continuous cost on $O\times O$, then the following duality holds:
\begin{align*}
\inf \left\{ \int_{O\times O} c(x, y) d\pi \ | \ \pi \in {\rm SMT}(\mu, \nu) \right\} 
=\sup\left\{\int_O \beta d\nu - \int_O \alpha d\mu; \, (\beta, \alpha) \in {\cal K}_c(O)   \right\},
\end{align*}  
where
\begin{align*}
{\cal K}_c(O):= &\left\{\hbox{$\alpha, \beta: O \to \R$ locally Lipschitz, such that there is  $p \in \mathcal{P}(O\times O)$}\right. \\
  & \qquad \qquad \qquad \hbox{with } \ \beta (y) - \alpha(x) +  p(x, y)  \le c( x, y)  \ \forall x, y \in O \Big\}. 
\end{align*}
 \end{proposition}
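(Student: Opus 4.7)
The plan is to establish the two inequalities separately. \emph{Weak duality} $D \le P$ (where $D$ denotes the right-hand supremum and $P$ the left-hand infimum) is the easy direction: for any $\pi \in {\rm SMT}(\mu,\nu)$ and any $(\alpha,\beta) \in \mathcal{K}_c(O)$ with companion $p \in \mathcal{P}(O\times O)$, integrating the inequality $\beta(y)-\alpha(x)+p(x,y) \le c(x,y)$ against $\pi$ and using the marginals yields
$$
\int \beta\, d\nu - \int \alpha\, d\mu + \int p\, d\pi \;\le\; \int c\, d\pi.
$$
The third term on the left is nonnegative by the easy direction of Lemma~\ref{lem: char P S}: for $\mu$-a.e.\ $x$, since $y \mapsto p(x,y)$ is subharmonic with $p(x,x) = 0$ and $\delta_x \prec_O \pi_x$, one has $\int p(x,y)\,d\pi_x(y) \ge 0$. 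Taking the supremum on the left and the infimum on the right yields $D \le P$.

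For \emph{strong duality} $P \le D$, I would write $P$ as a min-max and invoke Sion's minimax theorem. On ${\rm Prob}(\overline{O\times O}) \times Y$ with $Y := C(\overline O) \times C(\overline O) \times \mathcal{P}(O\times O)$, consider the affine Lagrangian
$$
L(\pi, \alpha, \beta, p) \;:=\; \int [c(x,y) + \alpha(x) - \beta(y) - p(x,y)]\,d\pi \;+\; \int \beta\, d\nu \;-\; \int \alpha\, d\mu.
$$
Taking the supremum over $Y$ first: the supremum is $+\infty$ unless $\pi^1 = \mu$ (from $\sup_\alpha$), $\pi^2 = \nu$ (from $\sup_\beta$), and $\int p\, d\pi \ge 0$ for every $p \in \mathcal{P}(O\times O)$ (using that $\mathcal{P}(O\times O)$ is a cone to drive scaling to $+\infty$). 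Absolute continuity of $\mu$ then lets Lemma~\ref{lem: char P S} identify this set of $\pi$'s precisely as ${\rm SMT}(\mu,\nu)$, on which $\sup_Y L = \int c\, d\pi$. Hence $\inf_\pi \sup_Y L = P$.

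Swapping the order, the infimum of $L(\cdot,y)$ over probability measures equals $\min_{\overline{O\times O}}(c + \alpha - \beta - p) + \int \beta\, d\nu - \int \alpha\, d\mu$; shifting $\alpha$ by the constant $\min(c + \alpha - \beta - p)$ produces a triple in $\mathcal{K}_c(O)$ realizing the very same value, so $\sup_Y \inf_\pi L = D$. Sion's minimax theorem applies since ${\rm Prob}(\overline{O\times O})$ is convex and weak*-compact by Banach--Alaoglu, $Y$ is convex, and $L$ is affine and continuous in each argument (weak* continuity in $\pi$ uses that $c,\alpha,\beta,p \in C(\overline{O\times O})$). The resulting $\inf \sup = \sup \inf$ delivers $P = D$.

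The main obstacle is justifying the swap, and the critical ingredient that makes it work is Lemma~\ref{lem: char P S}, which is precisely why the absolute-continuity hypothesis on $\mu$ enters. A cosmetic issue is that Proposition~\ref{thm: no gap HMT} requires $\alpha,\beta$ locally Lipschitz whereas the Sion argument naturally produces continuous ones; this is resolved by uniform approximation on the compact set $\overline O$ together with a constant shift in $\alpha$ to restore the inequality, which preserves the value of the supremum.
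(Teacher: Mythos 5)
Your argument is correct and belongs to the same family as the paper's: both hinge on a minimax swap plus the cone observation that $\sup_{p\in\mathcal{P}(O\times O)}\int p\,d\pi$ equals $0$ on subharmonic martingale plans and $+\infty$ otherwise, together with Lemma~\ref{lem: char P S} — which is exactly where the absolute continuity of $\mu$ enters, as you correctly identify. The difference is one of organization. The paper first applies classical Kantorovich duality to the cost $c-p$ for each fixed $p\in\mathcal{P}(O\times O)$, collapsing the supremum over $(\alpha,\beta)$ into $\inf_{\pi\in\Gamma(\mu,\nu)}\int(c-p)\,d\pi$, and then swaps only the remaining $\sup_p\inf_\pi$. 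The payoff is that $\pi$ then ranges over $\Gamma(\mu,\nu)$, which is automatically weak*-compact and whose elements are supported in $\supp\mu\times\supp\nu\subset O\times O$, so continuity of $c$ on $O\times O$ is all that is needed for the minimax. You instead run a one-shot Lagrangian in $(\alpha,\beta,p)$ against $\pi\in{\rm Prob}(\overline{O\times O})$, which is a valid and more symmetric presentation of the same idea; but it asks for weak*-continuity of $L(\cdot,\alpha,\beta,p)$ on all of ${\rm Prob}(\overline{O\times O})$, and hence that $c$ extend continuously to $\overline{O\times O}$ — a point your draft glosses over and which the paper's factorization through Kantorovich sidesteps (the fix is to restrict $\pi$ to ${\rm Prob}(K\times K)$ for a compact $K\subset O$ containing both supports; the $\sup$ over $\alpha,\beta$ still forces the marginals). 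The locally Lipschitz requirement in $\mathcal{K}_c(O)$ is treated equally informally in both arguments.
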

\begin{proof} Clearly, the right-hand side is smaller than the left-hand side since for every $\alpha, \beta$ in ${\cal K}_c(O)$ and their corresponding $p$ in $\mathcal{P}(O\times O)$, we have $\int p(x, y) d\pi (x, y) \geq 0$, and $\pi$ has marginals $\mu$ and $\nu$.

For the reverse inequality, we first note that Kantorovich duality for the standard optimal transport problem with cost $c(x, y) - p(x, y)$, yields that 
\begin{align*}
  \sup\left\{\int_O \beta d\nu - \int_O \alpha d\mu; \, (\beta, \alpha) \in {\cal K}_c(O)   \right\}
  = \sup_{p \in  \mathcal{P} }  \inf_{\pi \in \Gamma(\mu, \nu) } \int (c(x, y) - p(x, y) )d\pi. 
  \end{align*}
Now, by Von-Neuman min-max theorem, we can interchange the order of inf and sup 
(note that $X= \Gamma(\mu, \nu)$ is compact convex in ${\rm Prob}(\R^d \times \R^d)$ and $Y=\mathcal{P}_{\mathcal{SH}}$ is convex in $L^1 (\overline{\Omega\times \Omega)}$) to get that
  \begin{align}\label{eq: inf sup 2}
\sup\left\{\int_O \beta d\nu - \int_O \alpha d\mu; \, (\beta, \alpha) \in {\cal K}_c(O)   \right\}= \inf_{\pi \in \Gamma(\mu, \nu)} \sup_{p \in \mathcal{P}} \int (c(x, y) - p(x, y)) d\pi.  
 \end{align}
Now, notice that the supremum over $p$ in the latter can be finite only when $\int p(x, y) d\pi (x, y) = 0$, since otherwise we can replace $p$ with $\lambda p$ for some $\lambda \ne 0$, making the value of the integral $\int (c(x, y) - \lambda p(x, y) ) d\pi (x, y)$ as large as possible. Therefore,  from Lemma~\ref{lem: char P S}, the infimum in \eqref{eq: inf sup 2} can be restricted  to $\pi \in {\rm SMT}(\mu, \nu)$, that is, 
\begin{align*}
\sup\left\{\int_O \beta d\nu - \int_O \alpha d\mu; \, (\beta, \alpha) \in {\cal K}_c(O)   \right\}  = \inf_{\pi \in {\rm SMT}(\mu, \nu) } \sup_{p \in \mathcal{P}} \int (c (x, y) - p (x, y) )d\pi.
\end{align*}
But the last expression is greater than or equal than 
$$\inf \left\{ \int_{O\times O} c(x, y) d\pi \ | \ \pi \in {\rm SMT}(\mu, \nu) \right\},$$
 since $0 \in \mathcal{P}(O\times O)$. 
 This completes the proof of the weak duality for submartingale transport plans. 
\end{proof}
The proof of Theorem \ref{thm: no gap SEP} follows from the above combined with Proposition \ref{same}.


\section{A monotonicity principle and its symmetric version} 

We first make more precise the filtered Brownian probability space on which we operate.  We consider 
 $C(\R_+) = \{ \omega : \R_+ \to \R^d \,\,|\,\, \omega(0)=0, \,\omega \text{ is continuous}\}$ to be the  path space starting at $0$. The probability space will be $\Omega := C(\R_+) \times \R^d$ equipped with the probability measure $\mathbb P := \mathbb W \otimes \mu$, where $\mathbb W$ is the Wiener measure and $\mu$ is a given (initial) probability measure on $\R^d$. Stopping times and randomized stopping times will be with respect to this obviously filtered probability space.

Following 
Beiglb{\"o}ck-Cox-Huesmann \cite{bch}, we let each $(\omega, x) \in \Omega$ denotes a path $(\omega, x) (t) = \omega^x (t) := x + \omega(t)$ starting at $x\in \R^d$. We then let $S$ be the set of all {\em stopped paths}
\begin{align}\label{StoppedPaths}
S =\{(f^x, s) \,\,|\,\, f^x:[0,s] \to \R^d \mbox{ is continuous and $f^x(0)=x$}\}.
\end{align} 
Next, we introduce the \emph{conditional randomized stopping time given $(f^x,s) \in S$}, that is, the normalized stopping measure given that we followed the path $f^x$ up to time $s$. For $(f^x, s) \in S$ and $\omega \in C(\R_+)$, define the {\em concatenate path} $f^x\oplus\omega$ by $(f^x\oplus\omega)(t) = f^x(t)$ if $t\leq s$, and $(f^x\oplus\omega)(t) = f^x(s) + \omega (t-s)$ if $t>s$.
\begin{definition}[Conditional stopping time \cite{bch}]\label{def:CRST}
 The conditional randomized stopping time of $\xi$ given $(f^x,s)\in S$, denoted by $\xi^{(f^x, s)}$, is defined on $\omega \in C(\R_+)$ as follows:
\begin{align*}\label{CondStop} &\xi^{(f^x,s)}_\omega([0,t])= \frac{1}{1 - \xi_{f^x\oplus\omega}([0,s])}\left(\xi_{f^x\oplus\omega}([0,t+s])-\xi_{f^x\oplus\omega}([0,s])\right) \\ &\mbox{\,\,\,\quad\quad\quad\quad\quad \quad\quad if \quad$\xi_{f^x\oplus\omega}([0,s]) < 1$,}\\
&\xi^{(f^x,s)}_\omega(\{0\}) = 1\quad\mbox{if \quad $\xi_{f^x\oplus\omega}([0,s]) =1$.}
\end{align*}
\end{definition}
\noindent According to \cite{bch}, this is the normalized stopping measure of the ``bush'' which follows the ``stub'' $(f^x,s)$. Note that $\xi_{f^x\oplus\omega}([0,s])$ does not depend on the bush $\omega$.

We now give a heuristic description of a notion that we introduce below. Let $\tau$ be a stopping time, and suppose that Brownian motion starts at $x$, reaches $y$ at time $t$, then continuous to travel until the stopping time $\tau$,  i.e. $B^x_t(\omega) = y$ and $t < \tau(\omega^x)$. 
Now by the strong Markov property, Brownian motion will continue after it hits $y$ for the remaining time $\tau - t$ leading to a  (conditional) probability measure $\psi_y = {\rm Law}(B^y_{\tau - t})$ that is centered at $y$ .
Suppose now that Brownian motion starting at ${x'}$ is stopped at $\tau$, and let   $y'=B^{x'}_{\tau} (\omega')$. We shall then denote such a pair of transport along the stopping time $\tau$ by
$(x \to  \psi_y : x' \to y') \in \tau$.
We shall then consider the following cost for such a pair of transport
\begin{align}
C (x \to \psi_y : x' \to y') = \int c(x, z) \,d\psi_y (z) + c(x', y').
\end{align}
More formally, consider $S$ the set of all stopped paths, and let  $\Gamma \subset S$ be a concentration set of a given stopping time $\tau$, i.e. $\tau(\Gamma) =1$. We shall write
\begin{align}
(x \to \psi_y : x' \to y') \in (\tau, \Gamma), 
\end{align}
if there exist $(f^{x}, t)$, $(g^{x'}, t') \in \Gamma$  and  $s < t$, such that $y = f^{x}({s})$, $y' = g^{x'}({t'})$, and $\psi_y =  {\rm Law}(B^y({\tau^{(f^{x},s)}}))$.

In the sequel, we shall denote by ${\rm SH}(x)$ any probability measure $\psi$ whose barycenter $x$ satisfies $\delta_x \prec_O \psi$. As seen above, these are the probabilities that can be obtained by stopped Brownian motions starting at $x$. The following proposition will be crucial for our main result. It was proved  by Beiglb{\"o}ck-Cox-Huesmann \cite{bch} for even more general path-dependent costs. 

\begin{theorem}[Monotonicity principle \cite{bch}]\label{monoprin} Suppose $c$ is a cost function and that $\tau$ is an optimal stopping time for the minimization problem \eqref{opt}. Then, there exists a  Borel set $\Gamma\subseteq S$ such that $\tau(\Gamma)=1$, and $\Gamma$ is {\em $c$-monotone} in the following sense: If $\psi_y \in {\rm SH}(y)$ and $(x \to \psi_y : x' \to y) \in (\tau,\Gamma)$, then
\begin{align}\label{sh5}
C(x \to \psi_y : x' \to y) \leq C(x \to y : x' \to \psi_{y}).
\end{align}
\end{theorem}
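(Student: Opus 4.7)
My plan is to follow the standard variational/cyclic-monotonicity strategy, adapted from optimal transport to the randomized-stopping setting. The idea is to argue by contradiction: assuming no such $c$-monotone concentration set $\Gamma$ exists, I will construct a competitor stopping time $\tau'\in \mathcal{T}(\mu,\nu)$ with strictly smaller cost, violating optimality of $\tau$.

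First I would fix a concentration set $\Gamma\subseteq S$ with $\tau(\Gamma)=1$ and consider the ``bad'' set $B\subseteq \Gamma\times \Gamma \times \{\text{SH-measures}\}$ consisting of configurations $((f^x,t),(g^{x'},t'),\psi_y)$ with $s<t$, $y=f^x(s)$, $y'=g^{x'}(t')$, $\psi_y\in {\rm SH}(y)$, and with \emph{strict} reverse inequality in \eqref{sh5}. If for every choice of $\Gamma$ this set $B$ were $\tau\otimes\tau$-negligible on projections, a standard exhaustion/union argument (iteratively removing null sets along a countable generating family of SH-measures $\psi_y$ obtained, e.g., from a countable dense family of admissible radii in the spherical-martingale representation of Section~2) would produce the desired $\Gamma$. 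So assume toward contradiction that for some $\Gamma$ and some measurable selection of swap data, $B$ has positive mass.

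Next I would invoke a measurable selection theorem (Jankov--von Neumann) to obtain Borel maps assigning, to paths passing through the stub $(f^x,s)$ with small positive $\tau$-mass, a paired stub $(g^{x'},t')$ and an SH-measure $\psi_y$ for which the strict inequality holds uniformly on some Borel subset $B_0$ of positive measure, and such that the inequality gap exceeds some $\delta>0$. Using the conditional stopping time notation $\xi^{(f^x,s)}$ from Definition~\ref{def:CRST} and the spherical-martingale/Skorokhod representation of Theorem~\ref{prop1}, I can realize $\psi_y$ as the law of $B^y_{\sigma}$ for an appropriate (randomized) stopping time $\sigma$ on a bush starting at $y$. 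The construction of $\tau'$ then proceeds by ``swapping bushes'': on a small fraction $\varepsilon>0$ of paths of type $(f^x,s)\oplus(\cdot)$ I replace the original continuation by the new bush $\sigma$ (giving terminal distribution $\psi_y$), while on a corresponding small fraction of paths of type $(g^{x'},\cdot)$ I stop at time $t'$ immediately with terminal distribution $\delta_y$. Because $\psi_y\in {\rm SH}(y)$, Theorem~\ref{prop1} guarantees that the bush realizing $\psi_y$ exists as a stopping time; because the total mass exchanged satisfies $\varepsilon\cdot (\psi_y-\delta_y)$ on the terminal side with matching sign on both halves, the output marginal remains $\nu$, and the input marginal is unchanged since we are only modifying bushes. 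The strict gap $\delta$ in \eqref{sh5} together with linearity of cost in the swap yields
\begin{equation*}
\mathbb{E}[c(B_0,B_{\tau'})] \le \mathbb{E}[c(B_0,B_\tau)] - \varepsilon\,\delta,
\end{equation*}
contradicting optimality.

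The main obstacle I anticipate is not the cost comparison, which is essentially algebraic once the swap is set up, but the delicate \emph{measurable} construction of $\tau'$ that preserves the randomized-stopping-time structure (the requirement that each $\rho_u$ be a genuine stopping time) and preserves the marginal $\nu$ exactly, not merely in expectation on one slice. This requires carefully disintegrating $\tau$ along the stub, verifying that the ``conditional'' bushes glue into a $\mathcal{F}_t$-adapted object, and checking that the fractions $\varepsilon$ used on the two halves of the swap are coupled in a Borel-measurable way so that the perturbation is simultaneously admissible on both sides. Once this bookkeeping is handled as in Beiglböck--Cox--Huesmann~\cite{bch}, the contradiction is immediate and the theorem follows; alternatively, one may simply quote their result, since our cost $c(x,y)=|x-y|^\alpha$ is a special case of their path-dependent framework.
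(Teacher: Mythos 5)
You note in closing that one may simply quote Beiglb\"ock--Cox--Huesmann, and that is exactly what the paper does: Theorem~\ref{monoprin} is stated and attributed to \cite{bch}, with no proof given here. So your sketch is, by construction, a different route from the paper's (which is pure citation). Your general framework --- argue by contradiction, pick a concentration set, use measurable selection to isolate a positive-mass set of ``stop-go'' pairs with a uniform gap $\delta$, then construct a competitor $\tau'$ by swapping bushes while preserving the marginal $\nu$ --- is indeed the right shape of the BCH argument, and your cautions about the adaptedness and marginal bookkeeping are the genuinely hard parts.

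However, the swap you actually describe is a null modification. You propose to replace the continuation of paths of type $(f^x,s)\oplus(\cdot)$ by a bush $\sigma$ realizing $\psi_y$ --- but $\tau$ already continues those paths with $\tau^{(f^x,s)}$, whose law \emph{is} $\psi_y$ by hypothesis. And you propose to stop paths of type $(g^{x'},\cdot)$ at time $t'$ --- but $\tau$ already stops them there. So as written $\tau'=\tau$ in distribution and no contradiction appears. The correct modification is the reverse: stop the $(f^x,s)$-paths at $y=f^x(s)$ (terminal law $\delta_y$), and graft the bush realizing $\psi_y$ onto the $(g^{x'},t')$-paths after they reach $y'=g^{x'}(t')=y$ (terminal law $\psi_y$). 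The two terminal-mass changes are then $\varepsilon(\delta_y-\psi_y)$ and $\varepsilon(\psi_y-\delta_y)$, which \emph{cancel} (opposite, not ``matching,'' signs), so the $\nu$-marginal is preserved, while the expected cost drops by $\varepsilon\,\bigl[C(x\to\psi_y:x'\to y)-C(x\to y:x'\to\psi_y)\bigr]\ge \varepsilon\delta>0$, giving the contradiction. This is a fixable slip in an otherwise sound outline, but since the paper relies on the citation, your cleanest path is the one you already named.
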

Here is a first simple application of the monotonicity principle.
\begin{corollary} \label{stay} Let $\tau$ be a stopping time in ${\cal T}_O(\mu, \nu)$ that minimizes problem \eqref{opt} and assume $c(x,y) = |x-y|^\alpha$ with $0<\alpha\leq1$. Then, we must have $\tau=0$ 
on the common mass $\mu \wedge \nu$. 
 Therefore, the minimization problem \eqref{opt} can be restricted to the disjoint marginals $\bar \mu := \mu - \mu \wedge \nu$ and $\bar \nu := \nu - \mu \wedge \nu$.
\end{corollary}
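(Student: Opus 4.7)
The plan is to apply the Monotonicity Principle (Theorem~\ref{monoprin}) and exploit the subadditivity of $r\mapsto r^\alpha$ on $[0,\infty)$ for $0<\alpha\leq 1$. The heuristic is simple: if any $\mu\wedge\nu$-mass starts at $y$ and is transported to some $z\neq y$, then $y$ must also receive $\nu$-mass from some $x'\neq y$; swapping so that the $y$-started particle stops immediately at $y$ while the $x'$-started particle continues past $y$ to $z$ is cheaper by the $\alpha$-triangle inequality, contradicting optimality of $\tau$.

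I would first set up the decomposition. Let $\pi=\mathrm{Law}(B_0,B_\tau)$ and write $\pi=\pi_\Delta+\pi^c$, where $\pi_\Delta$ is concentrated on the diagonal and $\pi^c$ on $\{x\neq y\}$. Let $m$ denote the common marginal of $\pi_\Delta$, so $m\leq \mu\wedge\nu$, and let $\mu^c=\mu-m$, $\nu^c=\nu-m$ be the marginals of $\pi^c$. Suppose for contradiction $m\neq \mu\wedge\nu$; then $\eta:=(\mu\wedge\nu)-m>0$ satisfies $\eta\leq\mu^c$ and $\eta\leq\nu^c$, so $\mu^c\wedge\nu^c\geq\eta>0$. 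Pick $y\in\supp(\mu^c\wedge\nu^c)$: at $y$, positive $\mu$-mass leaves (yielding a nontrivial $\psi_y\in \mathrm{SH}(y)$, the conditional law of $B^y_\tau$ associated to the trivial stub $(f^y,0)$, with $\psi_y\neq\delta_y$) and positive $\nu$-mass arrives (yielding some $x'\neq y$ together with a stopped $\Gamma$-path $(g^{x'},t')$ ending at $y$). Thus the configuration $(y\to\psi_y:x'\to y)\in(\tau,\Gamma)$ is realized.

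Applying the monotonicity inequality \eqref{sh5} in this configuration yields
\begin{equation*}
\int c(y,z)\,d\psi_y(z)+c(x',y)\ \leq\ c(y,y)+\int c(x',z)\,d\psi_y(z)\ =\ \int c(x',z)\,d\psi_y(z),
\end{equation*}
while subadditivity $|x'-z|^\alpha\leq |x'-y|^\alpha+|y-z|^\alpha$, valid for $0<\alpha\leq 1$, integrated against $\psi_y$ gives the reverse inequality. Hence equality holds throughout and $|x'-z|^\alpha=|x'-y|^\alpha+|y-z|^\alpha$ for $\psi_y$-a.e.\ $z$. For $\alpha<1$, strict subadditivity of $r\mapsto r^\alpha$ when both summands are positive forces $|y-z|=0$, so $\psi_y=\delta_y$. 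For $\alpha=1$, the triangle equality forces $z$ to lie on the ray $\{y+sv:s\geq 0\}$ with $v=(y-x')/|y-x'|$, and since $\psi_y\in\mathrm{SH}(y)$ has barycenter $y$ while $s\geq 0$, we again get $\psi_y=\delta_y$. Both conclusions contradict the nontriviality of $\psi_y$, so $m=\mu\wedge\nu$; and since the diagonal part of $\pi$ contributes zero to the cost, restriction to $(\bar\mu,\bar\nu)$ is immediate.

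The main obstacle I anticipate is the measure-theoretic justification of selecting the specific pair $(y,x')$ realizing the required configuration \emph{within} the $\tau$-full measure set $\Gamma$ rather than merely on a set of positive probability. This should follow from a Fubini/density-point argument using regular conditional distributions of $\tau$, leveraging $\mu^c\wedge\nu^c>0$ on $\supp(\eta)$; however, carefully handling non-atomic $\mu,\nu$ and ensuring the stub $(f^y,0)$ and path $(g^{x'},t')$ can be chosen simultaneously in $\Gamma$ is where technical care is needed.
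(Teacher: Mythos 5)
Your proof is correct and takes essentially the same approach as the paper's: both invoke the Monotonicity Principle (Theorem~\ref{monoprin}) at a point of the common mass that diffuses nontrivially while receiving $\nu$-mass from elsewhere, and both defeat optimality via strict $\alpha$-subadditivity of the distance for $0<\alpha\le 1$. The only cosmetic difference is that you apply \eqref{sh5} first and then analyze the equality case (including the ray/barycenter argument for $\alpha=1$), whereas the paper derives the strict reverse inequality directly; both spell out the same point, and both leave the same measure-theoretic selection of the configuration within a $\tau$-full set $\Gamma$ at the heuristic level that you flag.
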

\begin{proof}
Indeed, if not, then heuristically, there is a particle at $x$ in $\mu \wedge \nu$ which diffuses to some $\psi_x \in {\rm SH}(x) \setminus \{\delta_x\}$, and so another particle at $y$, $y \neq x$, has to flow into $x$ and make up for the loss. Mathematically, for any $\Gamma \subset S$ with $\tau(\Gamma)=1$, there should exist $x, y \in \R^d$, $x \neq y$ and $ \psi_x \in {\rm SH}(x) \setminus \{\delta_x\}$, such that
\begin{align*}
(x \to \psi_x : y \to {x}) \in (\tau, \Gamma).
\end{align*}
Now for $0<\alpha\leq1$ and $z \in \R^d$, $|x-z|^{\alpha} + |y-x|^{\alpha} \geq |y-z|^{\alpha}$, so that
\begin{align}
\int |x-z|^{\alpha} d\psi_x (z) + |y-x|^{\alpha} \geq \int |y-z|^{\alpha} d\psi_x (z).
\end{align}
In other words,
\begin{align}
C(x \to \psi_x : y \to {x}) \geq C(x \to x : y \to \psi_{x}).
\end{align}
But the inequality is in fact strict since $\psi_x \neq \delta_{x}$, hence by theorem \ref{monoprin}, $\tau$ cannot be a minimizer.
\end{proof}

\begin{remark} Note that the above corollary implies that whenever we are studying the minimization problem with the cost $c(x,y) = |x-y|^\alpha$ with $0<\alpha\leq1$, we can assume that $\mu \wedge \nu=0$ without loss of generality.
\end{remark}
We now give a variant of Theorem \ref{monoprin}, which exploits the radial symmetry of the marginals $\mu$ and $ \nu$. For this, we will introduce several notions related to radial symmetry. \\
First, recall that a stopping time $\xi$ can be considered as a probability measure on  the set $S$ of stopped paths. We interpret $\xi$ as a transport plan in the following way: for $(f^x,s) \in S$, $\xi$ transports the infinitesimal mass $d\xi \big{(}(f^x,s)\big{)}$ from $x \in \R^d$ to $f^x(s) \in \R^d$  along the path $(f^x, s)$. Now define a map $T : S \to \R^d \times \R^d$ by 
$$T((f^x,s)) = (x, f^x(s))$$
and let $T\xi$ be the push-forward of the measure $\xi$ by the map $T$, thus $T\xi$ is a probability measure on $\R^d \times \R^d$. Now we give the definition of $R$-equivalence.

\begin{definition} Let $\lambda(x)=|x|$ be the modulus map. 
\begin{enumerate}
\item Two probability measures $\phi$ and $\psi$ on $\R^d$ are said to be $R$-equivalent if their push-forward measures by $\lambda$ coincide, i.e. $\lambda_\# \phi = \lambda_\# \psi$.  We then write $\phi \cong_R \psi$.
\item Two stopping times $\xi$ and $\zeta$ are said to be $R$-equivalent if the first marginals and second marginals of $T\xi$ and $T\zeta$ are $R$-equivalent, respectively. We then write $\xi \cong_R \zeta$.
\end{enumerate}
\end{definition}
\noindent The following symmetrization was introduced in \cite{Lim} for the {\em Martingale Optimal Transport Problem}. It will also be useful in this paper. \\
Let $\mathfrak{M}$ be the group of all $d \times d$ real orthogonal matrices, and let $\mathcal{H}$ be the Haar measure on $\mathfrak{M}$. For a given $M \in \mathfrak{M}$ and a stopping time $\xi$, we define $M\xi$ as follows: for each $A \subset S$, set
$$(M\xi)(A) = \xi(M(A)).$$
Clearly, $M\xi$ is also a stopping time. Now we introduce the symmetrization operator which acts on both the probability measures on $\R^d$ and on the stopping times.
\begin{definition}\label{symmetrization} The symmetrization operator $\Theta$ acts on  the set of probability measures on $\R^d$, and on the set of stopping times as follows:
  \begin{enumerate}
\item 
For each probability measure $\mu$ on $\R^d$ and $B \subset \R^d$,
$$(\Theta\mu)(B) =\int_{M \in \mathfrak{M}}  (M\mu)(B) \,d\mathcal{H}(M).$$
\item 
For each stopping time $\xi$ and $A \subset S$,
$$(\Theta\xi)(A) =\int_{M \in \mathfrak{M}}  (M\xi)(A) \,d\mathcal{H}(M).$$
\end{enumerate}
\end{definition}
Observe that $\Theta\mu$ is the unique radially symmetric probability measure which is $R$-equivalent to $\mu$. Moreover, for any stopping time $\xi$, notice that
\begin{align}
\text{if $T\xi$ has marginals $\mu$ and $ \nu$, then $T(\Theta\xi)$ has marginals $\Theta\mu$ and $ \Theta\nu$.}
\end{align}
This leads to the following important observation: Assume $c(x,y)$ is a rotation invariant cost function, i.e., $c(Mx, My) = c(x,y)$ for any $M \in \mathfrak{M}$,  and define the cost of a stopping time $\xi$ to be:
$$C(\xi) = \int_{\R^d \times \R^d} c(x,y)\,T\xi(dx,dy).$$
If $\xi$ solves the minimization problem \eqref{opt} where $T\xi$ has radially symmetric marginals $\mu$ and $\nu$, then for any stopping time $\zeta$, we must have
\begin{align}\label{compare}
C(\zeta) \ge C(\xi) \quad \text{whenever} \quad \zeta \cong_R \xi.
\end{align}
Indeed, if $C(\zeta) < C(\xi)$, then $\Theta\zeta$ will solve the minimization problem \eqref{opt} with the same marginals $\mu, \nu$ and less cost, a contradiction. Of course, if $\xi$ solves the maximization problem then the opposite inequality in \eqref{compare} must hold.\\

We are now ready to introduce the radial monotonicity principle. 
\begin{proposition}[Radial monotonicity principle]\label{rmonoprin} Suppose that $c$ is rotation invariant cost function and that $\tau$ is an optimal stopping time for the corresponding minimization problem \eqref{opt}, where the marginals $\mu, \nu$ are radially symmetric. Then, there exists a  Borel set $\Gamma\subseteq S$ such that $\tau(\Gamma)=1$, and $\Gamma$ is radially $c$-monotone in the following sense: If $\psi_y \in {\rm SH}(y)$, $(x \to \psi_y : x' \to {y'}) \in (\tau, \Gamma)$ and $|y| = |y'|$, then
\begin{align}\label{sh6}
C(x \to \psi_y : x' \to {y'}) \leq C(x \to y : x' \to \phi_{y'}) 
\end{align}
for any $\phi_{y'} \in {\rm SH}(y')$ that is R-equivalent to $\psi_y$.

\end{proposition}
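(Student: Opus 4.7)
The plan is to adapt the proof of the standard monotonicity principle (Theorem~\ref{monoprin}) by exploiting the rotation invariance that comes from radial symmetry of $\mu,\nu$ and of the cost $c$. First I would apply Theorem~\ref{monoprin} to $\tau$ to obtain a Borel set $\Gamma\subseteq S$ with $\tau(\Gamma)=1$ on which the standard $c$-monotonicity holds. I will then show that this $\Gamma$, after a possible measurable shrinking by a null set, already satisfies the stronger radial monotonicity.

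The argument will proceed by contradiction. Suppose the radial monotonicity fails on a positive-measure family of configurations in $(\tau,\Gamma)$: there exist stopped paths $(f^x,t),(g^{x'},t')\in\Gamma$ with $s<t$, $|f^x(s)|=|g^{x'}(t')|$, setting $y:=f^x(s)$, $y':=g^{x'}(t')$, $\psi_y:={\rm Law}(B^y(\tau^{(f^x,s)}))$, together with some $\phi_{y'}\in{\rm SH}(y')$ that is $R$-equivalent to $\psi_y$, such that $C(x\to y:x'\to\phi_{y'})<C(x\to\psi_y:x'\to y')$. Following the bush/swap construction in \cite{bch}, I would build a modified stopping time $\tilde\tau$ performing this swap on a positive-measure set of paths: stop the $x$-particle early at $y$ (instead of letting it disperse to $\psi_y$), and extend the $x'$-particle past $y'$ via an embedding of $\delta_{y'}$ into $\phi_{y'}$, whose existence is guaranteed by Theorem~\ref{prop1} since $\delta_{y'}\prec_O\phi_{y'}$.

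The first marginal of $\tilde\tau$ is still $\mu$. Its second marginal $\tilde\nu$ is obtained from $\nu$ by removing $\psi_y+\delta_{y'}$ and adding $\delta_y+\phi_{y'}$ on the swap set, and since $|y|=|y'|$ and $\phi_{y'}\cong_R\psi_y$, one checks directly that $\tilde\nu\cong_R\nu$. Applying the symmetrization operator $\Theta$ of Definition~\ref{symmetrization}: since $\mu$ is radially symmetric, $\Theta\tilde\tau$ has first marginal $\Theta\mu=\mu$, and since $\tilde\nu\cong_R\nu$ with $\nu$ radially symmetric, $\Theta\tilde\nu=\Theta\nu=\nu$. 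Thus $\Theta\tilde\tau\in\mathcal{T}(\mu,\nu)$, and by rotation invariance of $c$ together with linearity of $\Theta$, $C(\Theta\tilde\tau)=C(\tilde\tau)<C(\tau)$, contradicting the optimality of $\tau$.

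The hard part will be the rigorous bush/swap construction of $\tilde\tau$ at the level of stopped paths in $S$, including a measurable selection of the embedding extending the $x'$-particle from $\delta_{y'}$ to $\phi_{y'}$ and confirming that the pointwise strict cost decrease integrates to a strict global decrease. The genuinely new ingredient beyond Theorem~\ref{monoprin} is the symmetrization step, which is exactly what allows swapping between $R$-equivalent bushes (rather than equal ones) without violating the marginal constraint: radial symmetry of $\mu,\nu$ absorbs the $R$-equivalence defect that the swap introduces.
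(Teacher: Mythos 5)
Your proof matches the paper's approach: both arguments hinge on the same bush/swap modification as in \cite{bch} and on the observation that, since the swapped stopping time is only $R$-equivalent to $\tau$, radial symmetry of $\mu,\nu$ lets one symmetrize it (via $\Theta$) into a genuine competitor in ${\cal T}(\mu,\nu)$ with strictly lower cost, contradicting optimality. The paper packages this as inequality \eqref{compare}, which is proved by exactly your $\Theta$-argument, so the substance is identical.

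One point worth being careful about: you propose to \emph{start} from the $\Gamma$ produced by the standard monotonicity principle (Theorem~\ref{monoprin}) and then ``shrink it by a null set'' to obtain radial monotonicity. That ordering does not quite work. A radial stop-go pair (involving some $\phi_{y'}\cong_R\psi_y$ with $y\neq y'$) need not be a classical stop-go pair, so the set where radial monotonicity fails is not a priori $\tau$-null inside the classically monotone $\Gamma$ — it merely must be null if $\tau$ is optimal. What one actually needs is to re-run the Beiglb\"ock--Cox--Huesmann stop-go machinery with the \emph{enlarged} class of radial swaps as the admissible perturbations, using measurable selection to produce $\Gamma$ directly; the key admissibility check is precisely the one you identify, namely that the $R$-equivalence defect of the post-swap second marginal is absorbed by $\Theta$ because $|y|=|y'|$, $\phi_{y'}\cong_R\psi_y$, and $\mu,\nu$ are radial. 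The paper glosses over this in the same way you do (``It follows directly from the general monotonicity principle \ldots''), so you are in good company, but the phrase ``the same $\Gamma$ after shrinking by a null set'' should be dropped in favor of ``the $\Gamma$ furnished by the stop-go machinery applied with radial swaps.''
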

\begin{proof} It follows directly from the general monotonicity principle once applied to the case where the marginals are radially symmetric.  Indeed, if the optimal stopping time $\tau$ allows a particle starting at $x$ to diffuse  when it reaches $y$ so that  it becomes the probability measure $\psi_y$, but takes another particle at ${x'}$  to stop at $y'$, and if we have the opposite inequality 
\begin{align}\label{sh2}
C(x \to \psi_y : x' \to {y'}) > C(x \to y : x' \to \phi_{y'}) 
\end{align}
for some $\phi_{y'} \in {\rm SH}(y')$, then we can ``modify" the stopping time $\tau$ to $\tau'$ in such a way that  the particle at $x$ now stops at $y$ by $\tau'$, but instead the particle at $x'$ starts diffusing at $y'$ until it becomes  $\phi_{y'}$. Then \eqref{sh2} means that the cost for $\tau'$ is smaller than that of $\tau$, but note that  the modified stopping time $\tau'$ may not satisfy the terminal marginal condition $\nu$. However, as $\psi_y \cong_R \phi_{y'}$ and $|y| = |y'|$, $\tau'$ is also $R$-equivalent to $\tau$, a contradiction by \eqref{compare}.  The radial monotonicity principle asserts the existence of a set of stopped paths $\Gamma$ which supports the optimal stopping time and resists any such modification.
\end{proof}

\section{A variational lemma}
Let $S_{y,r}$ be the uniform probability measure on the sphere of center $y$ and radius $r$, arguably the most simple element in ${\rm SH}(y)$. We choose $c(x,y) = |x-y|$ for simplicity and consider the following ``gain" function,
\begin{align*}
G(x) = G(x,y,r) := \int  |x-z| \,dS_{y,r}(z) -  |x-y|,
\end{align*}
and its gradient $\nabla_x G$. Note that $G$ is essentially the increase in cost when the Dirac mass at $y$ diffuses uniformly onto the sphere. It satisfied the following properties:
\begin{enumerate}
\item If $|x-y| < |x'-y'|$ and $r$ is fixed, then $G(x,y,r) > G(x',y',r)$.\\
In other words, for the same diffusion, the increase in cost is greater when the distance $|x-y|$ is small. Hence, for the minimization problem, it is better to stop particles near the source $x$, and let the particles that are far from $x$ to diffuse, as long as the given marginal condition is respected.
\item The vector $\nabla G(x)$ points toward the direction $y-x$, thus the directional derivative $\nabla_u G(x)$ is
\begin{align*}
\nabla_u G(x) < 0 \quad \text{if} \quad \langle u, y-x \rangle < 0.
\end{align*}
Hence the gain function decreases when $x$ moves away from the ``center of diffusion" $y$.  By combining this with the radial monotonicity principle, one can get crucial information about the optimal stopping time. 
\end{enumerate}
From now on,  $c(x,y) = |x-y|^{\alpha}$, $0 < \alpha \neq 2$. We define the {\em gain function}
\begin{align*}
G(x, \psi_y) := \int  |x-z|^\alpha d\psi_y (z) -  |x-y|^\alpha \quad \text{for} \quad \psi_y \in {\rm SH}(y),
\end{align*}
and note that 
\begin{align*}
G(x, \psi_y) - G(x', \phi_{y'}) = C(x \to \psi_y : x' \to {y'}) - C(x \to y : x' \to \phi_{y'}).
\end{align*}
The following variational lemma is crucial for our analysis.
\begin{lemma}\label{key}
Let $x, y$ be nonzero vectors in $\R^d$ and let $r = |x|$. Let $u$ be a unit tangent vector to the sphere $S_r$ at $x$, such that $\langle u, y \rangle <0$. Let $\psi_y \in {\rm SH}(y) \setminus \{ \delta_y\}$. Then, there exists a $\phi_y \in {\rm SH}(y)$ which is R-equivalent to $\psi_y$, such that
\begin{align*}
G(x, \phi_y) &= G(x, \psi_y) \\
\nabla_u G(x, \phi_y) &< 0 \quad \text{if} \quad \,0 < \alpha < 2,\\
\nabla_u G(x, \phi_y) &> 0 \quad \text{if} \quad\quad\quad \alpha > 2,
\end{align*}
where the directional derivative is applied to the $x$ variable.
\end{lemma}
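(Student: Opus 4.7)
My plan is to construct $\phi_y$ as a suitable symmetrization of $\psi_y$ inside its $R$-equivalence class---using orthogonal transformations that fix $y$---so that the value $G(x,\psi_y)$ is preserved while the directional derivative acquires the predicted sign, via a Chebyshev-type correlation inequality on the sphere.

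\textbf{Reflection through $P:=\mathrm{span}(x,y)$.} Since $\langle u,y\rangle<0$, the vectors $x$ and $y$ are linearly independent, so $P$ is a genuine $2$-plane. Let $\sigma_P\in O(d)$ be the reflection fixing $P$ pointwise and reversing $P^\perp$. Then $\sigma_Py=y$ and $\sigma_Px=x$, so $(\sigma_P)_\#\psi_y\in{\rm SH}(y)$, is $R$-equivalent to $\psi_y$, and $G(x,(\sigma_P)_\#\psi_y)=G(x,\psi_y)$. Replacing $\psi_y$ by $\frac12\bigl(\psi_y+(\sigma_P)_\#\psi_y\bigr)$, I may assume $\psi_y$ is $\sigma_P$-symmetric; consequently $\nabla_xG(x,\psi_y)\in P$, and so $\nabla_uG(x,\psi_y)=\nabla_{u_P}G(x,\psi_y)$, where $u_P\ne 0$ is the projection of $u$ onto $P$ (with $\langle u_P,y\rangle<0$).

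\textbf{Axial symmetrization, $G$-matching, and sign.} Next I average $\psi_y$ over the stabilizer $H_y\subset O(d)$ of the $y$-axis to obtain the axially symmetric $\tilde\psi_y\in{\rm SH}(y)$, which remains $R$-equivalent to $\psi_y$. Although $G(x,\tilde\psi_y)$ need not equal $G(x,\psi_y)$, the continuous function $M\mapsto G(M^{-1}x,\psi_y)$ on the compact $H_y$-orbit of $x$ attains $G(x,\psi_y)$ at $M=I$, and combining $\tilde\psi_y$ with suitable axially symmetric $R$-equivalent ${\rm SH}(y)$-measures whose $G$-values straddle $G(x,\psi_y)$ produces, via the intermediate value theorem, an axially symmetric $\phi_y\in{\rm SH}(y)$ in the $R$-equivalence class with $G(x,\phi_y)=G(x,\psi_y)$. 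For such $\phi_y$ one expands
\[
\nabla_{u_P}G(x,\phi_y)=-\alpha\int\langle u_P,z\rangle\,|x-z|^{\alpha-2}\,d\phi_y(z)+\alpha\langle u_P,y\rangle\,|x-y|^{\alpha-2},
\]
and parameterizes $z=z_1\hat y+r_\perp\omega$ with $\omega$ uniform on the unit sphere of $\{y\}^\perp$ and $\omega_1:=\langle\omega,\hat x_\perp\rangle$; the axial symmetry reduces the $\omega$-average to an integral of the form $\int\omega_1\,(A-B\omega_1)^{(\alpha-2)/2}\,d\sigma(\omega)$ with $A>B>0$ depending on $(z_1,r_\perp)$. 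Since $\omega_1\mapsto(A-B\omega_1)^{(\alpha-2)/2}$ is strictly decreasing for $\alpha>2$ and strictly increasing for $0<\alpha<2$, while $\omega_1$ has mean zero on $S^{d-2}$, Chebyshev's correlation inequality forces this integral to have sign opposite to $\alpha-2$; combined with the negative boundary term $\alpha\langle u_P,y\rangle|x-y|^{\alpha-2}$, this yields the claimed strict sign of $\nabla_uG(x,\phi_y)$, with strictness inherited from $\phi_y\ne\delta_y$.

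\textbf{Main obstacle.} The delicate point is producing the axially symmetric $\phi_y$ with $G(x,\phi_y)=G(x,\psi_y)$ exactly: axial symmetrization by itself shifts $G$, so one must verify that the range of $G(x,\cdot)$ on the axially symmetric $R$-equivalent ${\rm SH}(y)$-measures is an interval genuinely containing $G(x,\psi_y)$, which requires exhibiting non-trivial axially symmetric $R$-equivalent members of ${\rm SH}(y)$ with $G$-values on both sides of $G(x,\psi_y)$. A secondary difficulty is the case $d=2$, where $S^{d-2}$ degenerates to two points and Chebyshev's inequality reduces to a direct two-point comparison that must be verified by hand.
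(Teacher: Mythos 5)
Your approach diverges substantially from the paper's and, as written, has two genuine gaps.

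\textbf{The idea you are missing.} The paper does not symmetrize about the $y$-axis at all. After rotating so that $x=x_1e_1$ and $u=e_d$ (hence $y$ lies in the open lower half-space), the paper works with the hyperplane $H=\{z_d=0\}=u^\perp$, which passes through \emph{both} the origin and $x$. Reflection across $H$ therefore preserves $|z|$ (so it respects $R$-equivalence) \emph{and} preserves $|x-z|$ (so it leaves $G(x,\cdot)$ unchanged), and the derivative kernel $z\mapsto\nabla_{e_d,x}|x-z|^\alpha$ is odd in $z_d$, so any $H$-symmetric part of a measure contributes exactly zero to $\nabla_u G$. The construction is then purely probabilistic: realize $\psi_y=\mathrm{Law}(B^y_\tau)$, let $\eta$ be the hitting time of $H$, set $\sigma_y=\mathrm{Law}(B^y_{\tau\wedge\eta})$ (supported in $\{z_d\le0\}$, where $\nabla_{e_d,x}|x-z|^\alpha$ is strictly super- or subharmonic according to $\alpha\lessgtr2$, which gives the strict sign), and then extend $\tau\wedge\eta$ from $H$ by flipping a coin between the conditional stopping time and its $H$-reflection. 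The result $\phi_y$ is automatically $R$-equivalent to $\psi_y$, has $G(x,\phi_y)=G(x,\psi_y)$ because $x\in H$, and has $\nabla_u G(x,\phi_y)=\nabla_u G(x,\sigma_y)$ by oddness. Your symmetrizations fix $y$ rather than $x$ and the origin, which is why they disturb $G$ and force you into an unresolved matching problem.

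\textbf{Gap 1 (the $G$-matching).} Your intermediate-value step is precisely the unresolved point. You would need to exhibit axially symmetric, $R$-equivalent elements of ${\rm SH}(y)$ with $G$-values on both sides of $G(x,\psi_y)$; this is not done, and there is no reason it should be possible in general (e.g.\ $\psi_y$ could already be extremal for $G(x,\cdot)$ within its axially symmetric $R$-class). The constraint of staying in ${\rm SH}(y)$ while prescribing the radial profile is quite rigid and is not addressed.

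\textbf{Gap 2 (the sign computation).} Writing $u_P=a\hat y+b\hat x_\perp$, the assumption $\langle u_P,y\rangle<0$ forces $a\neq 0$. Then $\langle z,u_P\rangle=a z_1 + b\,r_\perp\omega_1$, and the $\omega$-average of $\langle z,u_P\rangle\,|x-z|^{\alpha-2}$ contains the term $a z_1\int(A-B\omega_1)^{(\alpha-2)/2}\,d\sigma(\omega)$ in addition to the $\omega_1$-weighted one. Your Chebyshev argument controls only the second term. Moreover the boundary term $\alpha\langle u_P,y\rangle|x-y|^{\alpha-2}$ is always negative, so for $\alpha>2$ you need the integral term to strictly dominate it in magnitude, which is a quantitative comparison that a sign argument alone does not furnish. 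These issues disappear in the paper's proof because the super/subharmonicity inequality $\int h\,d(\sigma_y)\lessgtr h(y)$ directly compares the whole integral against the boundary term in one stroke.
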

\begin{proof}[\bf Proof of Lemma~\ref{key}]
Let $c(x,z) = |x-z|^{\alpha}$ and take its partial derivative $\frac{\partial}{\partial {x_d}}$ at $x=0$, to obtain
\begin{align}\label{sh9}
h(z) := \nabla_{e_d} c (x,z) \big|_{x=0} = - \alpha \, |z|^{\alpha - 2}\, z_d
\end{align}
Taking the Laplacian in $z$, we get
\begin{align}\label{laplacian}
\Delta \,h(z) = -\alpha (\alpha - 2) (\alpha + d - 2) \,|z|^{\alpha -4} \, z_d.
\end{align}
In other words,  the function $h$ is
\begin{enumerate}[i)]
\item strictly superharmonic in the lower half-space $\{z_d < 0\}$ if $0 < \alpha <2$
\item strictly subharmonic in the lower half-space $\{z_d < 0\}$ if  $\alpha >2.$
 \end{enumerate}
Let $0 < \alpha < 2$, and assume without loss of generality that $x=x_1\, e_1=(x_1,0,...,0)$ and $u = e_d$. Then $\langle u, y \rangle <0$, which means that $y$ is in the lower half-space. Let $H=\{ z \in \R^d : z_d = 0\}$ and choose a stopping time $\tau$ for the Brownian motion $B^y$ such that $ {\rm Law}(B^y_\tau) = \psi_y$, and let $\eta$ be the first time $B^y$  hits $H$. Let $\sigma_y =  {\rm Law}(B^y_{\tau \wedge \eta})$. Then $\sigma_y$ is supported in the lower half-space and it is nontrivial, hence by the strict superharmonicity \eqref{laplacian}, we have
\begin{align*}
\nabla_u G(x, \sigma_y) < 0.
\end{align*}

Now we modify $\tau$ to $\tau'$ in the following way; if $\tau \leq \eta$, then we let $\tau' = \tau$. But if $\tau > \eta$, in other words if a particle at $y$ has landed on $H$ but not completely stopped by $\tau$, then we symmetrize the remaining time of $\tau$ (i.e. the conditional stopping time) with respect to $H$ and get $\tau'$. More precisely, let $\tau_H$ be the reflection of the  conditional stopping time of $\tau$ with respect to  $H$; that is, if $\tau$ stops a path emanating from $H$, then $\tau_H$ stops the reflected path at the same time. Now define $\tau' := \frac{\tau+\tau_H}{2}$ to be a randomization; before re-starting Brownian motion on $H$, we flip a coin and choose either $\tau$ or $\tau_H$ for the conditional stopping time.

Now, define $\phi_y =  {\rm Law}(B^y_{\tau'})$ and observe that
\begin{enumerate}[i)]
\item $\phi_y \cong_R \psi_y$ by the symmetry with respect to $H$ in the definition of $\tau'$.
\item $\nabla_u G(x, \phi_y) = \nabla_u G(x, \sigma_y)$, since the function $z \mapsto \nabla_{e_d} c (x,z)$ is odd in $z_d$ (see \eqref{sh9}) hence the symmetrization in the definition of $\tau'$ does not change $\nabla_u G$.
\end{enumerate}
This proves the lemma.
\end{proof}
Here is a consequence of the variational lemma.

\begin{proposition}\label{consequence}
Let $x_0, x_1, y$ be nonzero vectors in $\R^d$ and let $r = |x_0| = |x_1|$. Assume $|x_0 - y| < |x_1 - y|$. Fix a measure $\phi_y \in {\rm SH}(y) \setminus \{\delta_y\}$ and define
\begin{align*}
 \underline{\Re}(x, \phi_y) &:= \bigg{\{}\psi_y \in {\rm SH}(y) : \psi_y \in \arg\displaystyle\min_{\sigma_y \cong_R \phi_y}  \int  |x-z|^\alpha d\sigma_y (z)\bigg{\}}, \\
 \overline{\Re}(x, \phi_y) &:= \bigg{\{}\psi_y \in {\rm SH}(y) : \psi_y \in \arg\displaystyle\max_{\sigma_y \cong_R \phi_y}  \int  |x-z|^\alpha d\sigma_y (z)\bigg{\}}, \\
\underline{G} (x) &:= G (x, \psi_y) =  \int  |x-z|^\alpha d\psi_y (z) - |x - y|^\alpha \quad \text{for any} \quad \psi_y \in \underline{\Re}(x, \phi_y),\\
\overline{G} (x) &:= G (x, \psi_y) =  \int  |x-z|^\alpha d\psi_y (z) - |x - y|^\alpha \quad \text{for any} \quad \psi_y \in \overline{\Re}(x, \phi_y).
\end{align*}
Then, 
\begin{align*}
&\underline{G} (x_0) > \underline{G} (x_1) \quad \text{and} \quad \overline{G} (x_0) > \overline{G} (x_1)   \quad \text{if} \quad 0 < \alpha <2,\\
&\underline{G} (x_0) < \underline{G} (x_1) \quad \text{and} \quad \overline{G} (x_0) < \overline{G} (x_1)   \quad \text{if} \quad\quad\,\,\,\, \, \alpha > 2.
\end{align*}
\end{proposition}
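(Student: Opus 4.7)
Connect $x_0$ to $x_1$ by a smooth path $\gamma$ on the sphere $\{|x|=r\}$ along which $|x-y|$ is strictly monotone, and propagate the local information of Lemma~\ref{key} to the envelopes $\underline G,\overline G$. Since the admissible class $\mathcal A:=\{\sigma\in{\rm SH}(y):\sigma\cong_R\phi_y\}$ is invariant under rotations of $\R^d$ fixing $y$, and $\int|x-z|^\alpha d\sigma(z)$ is rotation-covariant, the values $\underline G(x),\overline G(x)$ depend only on $|x|$ and $|x-y|$. Using rotations around the axis $\R y$, which preserve both $|x|$ and $|x-y|$, we may therefore assume that $x_0,x_1$ lie in a common $2$-plane $V$ containing $y$ and on the same side of the line $V\cap\R y$; let $\gamma:[0,1]\to V\cap rS^{d-1}$ parameterize the arc from $x_0$ to $x_1$. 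Then $|\gamma(t)-y|^2=r^2+|y|^2-2\langle\gamma(t),y\rangle$ is strictly increasing, so the unit tangent $u(t):=\gamma'(t)/|\gamma'(t)|$ satisfies $\langle u(t),y\rangle<0$ throughout.

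\textbf{Regularity and applicability of Lemma~\ref{key}.} The class $\mathcal A$ is weak-$*$ compact (both defining conditions pass to weak limits and supports are uniformly contained in a bounded annulus), hence $\underline{\Re}(\gamma(t),\phi_y)$ and $\overline{\Re}(\gamma(t),\phi_y)$ are nonempty. Since each map $\sigma\mapsto\int|x-z|^\alpha d\sigma(z)$ is uniformly H\"older in $x$ on compact sets (uniformly in $\sigma\in\mathcal A$), the envelopes $h(t):=\underline G(\gamma(t))$ and $\tilde h(t):=\overline G(\gamma(t))$ are continuous on $[0,1]$. Moreover, strict convexity of the Euclidean ball shows that the only element of ${\rm SH}(y)$ supported on the sphere $\{|z|=|y|\}$ is $\delta_y$, so $\phi_y\neq\delta_y$ forces $\lambda_\#\phi_y\neq\delta_{|y|}$ and hence $\delta_y\notin\mathcal A$; therefore every extremizer differs from $\delta_y$, permitting Lemma~\ref{key} to be applied.

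\textbf{Local strictness and globalization.} Fix $t\in[0,1)$ and pick $\psi^t\in\underline{\Re}(\gamma(t),\phi_y)$. Lemma~\ref{key} at $x=\gamma(t)$ with direction $u(t)$ yields $\phi^t\in\mathcal A$ with $G(\gamma(t),\phi^t)=G(\gamma(t),\psi^t)=h(t)$ and, when $0<\alpha<2$, $\nabla_{u(t)}G(\gamma(t),\phi^t)<0$. Since $\phi^t$ is admissible at every $\gamma(s)$, there exists $\delta_t>0$ such that
$$h(s)\le G(\gamma(s),\phi^t)<G(\gamma(t),\phi^t)=h(t)\quad\text{for all }s\in(t,t+\delta_t).$$
A standard argument promotes this pointwise local strict decrease, combined with continuity of $h$, to global strict monotonicity: for $0\le s<t\le 1$, the set $E:=\{r\in[s,t]:h(r)<h(s)\}$ is nonempty by the local property at $s$, and if $r^*:=\sup E<t$ then continuity forces $h(r^*)\le h(s)$ while the local property at $r^*$ produces elements of $E$ larger than $r^*$, a contradiction; hence $r^*=t$, and replacing $s$ by a perturbation $r_0\in(s,s+\delta_s)$ with $h(r_0)<h(s)$ yields $h(t)\le h(r_0)<h(s)$. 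Taking $s=0,t=1$ gives $\underline G(x_1)<\underline G(x_0)$. The same argument with maximizers yields $\overline G(x_1)<\overline G(x_0)$, and the case $\alpha>2$ is identical except that Lemma~\ref{key} now supplies $\nabla_{u(t)}G>0$, making $h,\tilde h$ strictly increasing. The crucial subtlety is the passage from pointwise directional-derivative information (with $\phi^t$ depending on $t$ through the choice of extremizer) to global monotonicity of the envelope, which is handled via the continuity of $h,\tilde h$ together with the strict inequality provided by Lemma~\ref{key}.
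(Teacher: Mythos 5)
Your argument is correct and follows essentially the same route as the paper: connect $x_0$ to $x_1$ by a spherical arc along which $|x-y|$ is strictly increasing, invoke Lemma~\ref{key} at each point to get a competitor in the admissible class with strictly negative (resp.\ positive) directional derivative, and globalize via continuity of the envelope. The only divergences are cosmetic — you get continuity of $\underline G,\overline G$ from uniform H\"older estimates and weak-$\ast$ compactness rather than the paper's subsequence argument, you make explicit the rotation reduction and the fact that $\delta_y\notin\mathcal A$, and you globalize with a $\sup$/connectedness argument where the paper uses the one-sided Dini derivative bound $\limsup_{\epsilon\downarrow 0}\epsilon^{-1}\big(\underline G(x(t+\epsilon))-\underline G(x(t))\big)<0$ (and the analogous $\liminf$ for $\overline G$) — but these are interchangeable technical devices within the same proof.
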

\begin{proof}
First, we claim that $\underline{G}(x)$ and $\overline{G}(x)$ are continuous.\\
Indeed, let $x_n \rightarrow x$ in $\R^d$, and define
\begin{align*}
\underline{C} (x) := \underline{G} (x) + |x-y|^\alpha =  \int  |x-z|^\alpha d\psi_y (z)\quad \text{for any}\quad \psi_y \in \underline{\Re}(x, \phi_y).
\end{align*}
Set $a_n = \underline{C} (x_n)$ and $a = \underline{C} (x)$. 
Choose any subsequence $\{a_k\}$ of $\{a_n\}$ and  a corresponding sequence of measures $\psi_k \in \underline{\Re}(x_k, \phi_y) $. By compactness, $\{\psi_k\}$ has a subsequence $\{\psi_l\}$ which converges to, say $\psi$. Note that $\psi \in {\rm SH}(y)$ and $\psi \cong_R \phi_y$ by weak convergence. Now, write
\begin{align*}
\int  |x_l-z|^\alpha d\psi_l (z) - \int  |x - z|^\alpha d\psi (z) 
&= \bigg[ \int  (|x_l-z|^\alpha - |x - z|^\alpha) \,d\psi_l (z) \bigg] \\
&\quad + \bigg[\int  |x - z|^\alpha d\psi_l (z) - \int  |x - z|^\alpha d\psi (z) \bigg].
\end{align*}
The first bracket goes to zero as $l \rightarrow \infty$ since $|x_l-z|^\alpha - |x - z|^\alpha \rightarrow 0$ uniformly on every compact set in $\R^d$, and the second bracket goes to zero since $\psi_l \rightarrow \psi$.\\
Now we claim that 
\begin{align}
\int  |x - z|^\alpha d\psi (z) = \underline{C} (x) = a, \quad \text{i.e.}\quad \psi \in \underline{\Re}(x, \phi_y).
\end{align}
If not, then there exists a $\rho \in \underline{\Re}(x, \phi_y)$ such that 
\begin{align*}
\int  |x - z|^\alpha d\psi (z) &> \int  |x - z|^\alpha d\rho (z), \text{ hence} \\ 
\int  |x_l - z|^\alpha d\psi_l (z) &> \int  |x_l - z|^\alpha d\rho (z) \text{ \,for all large $l$,}
\end{align*}
a contradiction since $\psi_l \in \underline{\Re}(x_l, \phi_y)$. Therefore, $a_n \rightarrow a$, since this holds for any subsequence. 

To complete the proof of the proposition, we let $0 < \alpha < 2$, and choose any differentiable curve $x(t): [0, 1] \rightarrow S_r$ with $x(0) = x_0, \,x(1) = x_1$ and such that $|\,x(t) - y\,|$ is strictly increasing. In other words, we choose $x(t)$ in such a way that $|x(t)|=r$ and $\langle \frac{d}{dt}x(t), y \rangle <0$ for all $t$. Now for a fixed $t \in [0,1]$, choose any $\psi_y \in \underline{\Re}(x(t), \phi_y) $. Then Lemma \ref{key} gives $\sigma_y \in \underline{\Re}(x(t), \phi_y) $ with $\frac{d}{dt} G(x(t), \sigma_y) < 0$. By definition, $\underline{G}(x(s)) \leq G(x(s),\sigma_y)$ for any $s$, and $\underline{G}(x(t)) = G(x(t), \sigma_y)$. Hence
\begin{align*}
\limsup_{\epsilon \downarrow 0} \frac{\underline{G}(x(t+\epsilon)) - \underline{G}(x(t))}{\epsilon} &\leq 
\limsup_{\epsilon \downarrow 0} \frac{G(x(t+\epsilon), \sigma_y) - G(x(t), \sigma_y)}{\epsilon} \\
&= \frac{d}{dt} G(x(t), \sigma_y) < 0.
\end{align*}
The function $\underline{G} (x(t))$ is continuous and satisfies the above strict inequality for each $t \in [0,1]$, hence it must be strictly decreasing. \\
For $\overline{G} (x(t))$, we similarly use $\sigma_y \in \overline{\Re}(x(t), \phi_y) $ and $\frac{d}{dt} G(x(t), \sigma_y) < 0$ to get
\begin{align*}
\liminf_{\epsilon \downarrow 0} \frac{\overline{G}(x(t - \epsilon)) - \overline{G}(x(t))}{\epsilon} &\geq 
\liminf_{\epsilon \downarrow 0} \frac{G(x(t - \epsilon), \sigma_y) - G(x(t), \sigma_y)}{\epsilon} \\
&= -  \frac{d}{dt} G(x(t), \sigma_y) > 0.
\end{align*}
We again see that $\overline{G} (x(t))$ is strictly decreasing. \\
The case $\alpha >2$ can be proved in a similar fashion, and the proposition follows.
\end{proof}

\section{Optimal Stopping problem for radially symmetric marginals}

Finally, armed with Proposition \ref{consequence}, we establish the main theorem. Recall that in view of Corollary \ref{stay}, we can assume for the minimization problem with $0<\alpha\leq1$ that  $\mu \wedge \nu=0$ without loss of generality.

 \begin{theorem}\label{thm:main}
Suppose $\mu$, $\nu$ are compactly supported, radially symmetric probability measures on $\R^d$, $d\geq2$. Assume $\mu \wedge \nu =0$ and $\mu(\{0\}) =0$. Let $c(x,y) = |x-y|^\alpha$, where $0 < \alpha \neq 2$. Then the solution for the optimization problem \eqref{opt} under the marginals $\mu, \nu$ is unique and it is given by a non-randomized stopping time.
\end{theorem}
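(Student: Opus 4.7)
The plan is to rule out randomization by combining radial monotonicity (Proposition~\ref{rmonoprin}) with the strict gain inequality of Proposition~\ref{consequence}, and then deduce uniqueness from convexity of the set of optima. I concentrate on the minimization problem with $0 < \alpha < 2$; the case $\alpha > 2$ and the maximization counterpart follow by reversing inequalities in Proposition~\ref{consequence}. Since the cost is rotation invariant and both marginals are radially symmetric, the symmetrization $\Theta$ of Definition~\ref{symmetrization} sends any optimal $\tau$ to another optimizer with the same marginals and the same cost; so we may assume $\tau$ is rotationally invariant, and choose the Borel support $\Gamma \subseteq S$ furnished by Proposition~\ref{rmonoprin} to be rotationally invariant as well.

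Suppose for contradiction that $\tau$ is randomized. Then a positive-mass set of paths exhibits a genuine mixture between ``stop at $y$'' and ``continue past $y$'' for some $y \neq 0$ (the hypothesis $\mu(\{0\}) = 0$ ensures the nonzero property, and $\mu \wedge \nu = 0$ excludes the trivial case where $y$ is both an initial and terminal position). Concretely, I extract from $\Gamma$ two stopped paths: a diffusion path $(f^x, s)$ with $y = f^x(s)$ and nontrivial continuation $\psi_y := \mathrm{Law}(B^y_{\tau^{(f^x, s)}}) \in \mathrm{SH}(y) \setminus \{\delta_y\}$, and a stopping path $(g^{x'}, t')$ with $y' := g^{x'}(t')$, $|y'| = |y|$, and $\tau^{(g^{x'}, t')} = \delta_0$. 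Using rotational invariance of $\Gamma$ I rotate so that $y' = y$, and a Fubini-type selection using the rotational invariance of $\mu$ on the isotropy group of $y$ (nontrivial in dimension $d \geq 2$) allows me to further arrange $|x| = |x'|$ while $|x - y| \neq |x' - y|$.

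With such a configuration in hand, radial monotonicity (Proposition~\ref{rmonoprin}) gives, for every $\phi_y \in \mathrm{SH}(y)$ with $\phi_y \cong_R \psi_y$, the inequality $G(x, \psi_y) \leq G(x', \phi_y)$. Choosing $\phi_y \in \underline{\Re}(x', \psi_y)$ and using $\underline{G}(x) \leq G(x, \psi_y)$ (by definition of $\underline{\Re}(x,\psi_y)$) yields
\begin{equation*}
\underline{G}(x) \;\leq\; G(x, \psi_y) \;\leq\; \underline{G}(x'),
\end{equation*}
where $\underline{G}$ is built from the $R$-equivalence class of $\psi_y$ as in Proposition~\ref{consequence}. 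But then with $|x| = |x'|$ and (after relabelling if necessary) $|x - y| < |x' - y|$, Proposition~\ref{consequence} delivers the strict inequality $\underline{G}(x) > \underline{G}(x')$, a contradiction. Hence $\tau$ is non-randomized. Uniqueness follows at once: two distinct optima $\tau_1 \neq \tau_2$ would average to a randomized optimizer $\tfrac{1}{2}(\tau_1 + \tau_2)$, contradicting non-randomization.

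The key obstacle is the extraction argument in the second paragraph: one must guarantee that $\Gamma$ actually contains a diffusion/stopping pair whose sources $(x, x')$ share a common radius but differ in distance to the common endpoint $y$. This is where rotational invariance of $\tau$ and $\Gamma$, a disintegration of $\tau$ over the initial radius $|x|$, and the structural assumptions $\mu(\{0\}) = 0$ and $\mu \wedge \nu = 0$ enter critically. Roughly speaking, if every stopping behavior were uniquely determined by the starting radius, then $\tau$ would already be non-randomized and there would be nothing to extract; so the extraction is possible precisely in the regime where the contradiction is needed. Verifying this dichotomy rigorously---via measurable selection on the rotationally invariant set $\Gamma$ and careful use of the conditional stopping operator from Definition~\ref{def:CRST}---is the delicate step of the proof.
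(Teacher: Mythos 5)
Your high-level strategy---use the radial monotonicity principle together with Proposition~\ref{consequence} to rule out randomization, then get uniqueness from convexity of the set of optima---is the right one, and the step from ``no randomization'' to uniqueness is exactly the paper's. But the extraction argument in your second paragraph has two genuine gaps, and both are precisely what the paper's barrier construction is designed to close.

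First, the sign problem. When you write ``(after relabelling if necessary) $|x - y| < |x' - y|$'' you implicitly treat $x$ and $x'$ as interchangeable, but they play asymmetric roles: $x$ is the source of the \emph{diffusing} path and $x'$ the source of the \emph{stopped} path. Radial monotonicity gives $\underline{G}(x) \le \underline{G}(x')$ with these roles fixed, and Proposition~\ref{consequence} only reverses it when $|x - y|$ is the \emph{smaller} of the two distances. If you happen to extract a pair with $|x - y| > |x' - y|$ there is no contradiction, and you cannot repair this by rotation: rotations fixing $y$ move $x$ along a circle on which $|x - y|$ is constant. So the extraction must produce a pair with the distances ordered in the favourable direction, and nothing in your construction forces that.

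Second, the dichotomy in your last paragraph is false. The assertion ``if every stopping behavior were uniquely determined by the starting radius, then $\tau$ would already be non-randomized'' does not hold: a radially invariant stopping time can be genuinely randomized (a positive-mass split at a stopped path $(f^x,s)$ into ``stop'' and ``continue'' branches) while its law depends only on the starting radius. So the measurable-selection step you defer is not merely technical; without it, the argument does not establish the existence of a configuration from a \emph{second} starting point $x'$ with $|x'|=|x|$, a path in $\Gamma$ landing at a point $R$-equivalent to $y$, and the required strict distance inequality, all simultaneously.

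The paper avoids both gaps with one device. It introduces the barrier
$U_x^y = \{z : |z| = |y|,\ \langle x, y\rangle < \langle x, z\rangle\}$
and the notion of a \emph{forbidden pair}: two stopped paths in $\Gamma\times\Gamma$ sharing the \emph{same} initial point $x$, one of which stops at $y$ and the other of which passes through $U_x^y$ before it stops. If $\tau$ is randomized, so that some $(f^x,s)\in\Gamma$ with $y=f^x(s)$ has nontrivial conditional stopping time, then the continuation $g^y$ of that very same stopped path must almost surely cross $U_x^y$ (since $y$ lies on its boundary). The crossing point $z$ then automatically satisfies $|z|=|y|$ and $|x-z|<|x-y|$; rotating $z$ back to $y$ yields a pair $(x_0,x_1)=(Mx, x)$ with $|x_0-y|<|x_1-y|$, diffusion at $x_0$, stopping at $x_1$, to which Proposition~\ref{rmonoprin} and Proposition~\ref{consequence} apply and give the contradiction. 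No separate starting point $x'$, no measurable selection, and no relabelling are needed; the favourable sign is built into the definition of $U_x^y$.
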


\begin{proof}  Fix $0< \alpha<2$, and let $\tau$ be a minimizer for  \eqref{opt}. For $x, y \neq 0$ in $\R^d$ and $x \ne y$, we define the barrier set:
$$U_x^y = \{z \in \R^d : |z| = |y| \text{ and } \langle x, y \rangle < \langle x, z \rangle \}.$$
We shall say that a pair $(f, s)$ and $(g, t)$ in  $S$ is forbidden if  
$$
f(0)=g(0) \neq 0 \text{ and } \exists s' < s \text{ such that } f(s') \in U_{g(0)}^{g(t)}.
$$
In words, a forbidden pair consists of a path that penetrates the barrier generated by the other path.
We denote by ${\bf FP}$ the set of forbidden pairs. 
 First, we claim that there exists  $\Gamma \subset S$ on which $\tau$ is concentrated, such that $\Gamma$ does not admit any ``forbidden pair" that lies in $ \Gamma \times \Gamma$. \\
 Indeed, choose a radially $c$-monotone $\Gamma$ for $\tau$ as in Theorem \ref{rmonoprin} and suppose that ${\rm FP} \cap (\Gamma \times \Gamma) \neq \emptyset$. Then by the Markov property, the penetrating path yields a subharmonic measure, namely the conditional probability, whose barycenter is at the point where the barrier is hit. But this contradicts Corollary \ref{consequence} and Proposition \ref{rmonoprin}. Hence, ${\rm FP} \cap (\Gamma \times \Gamma) = \emptyset$.

Now, we show that  since $\Gamma$ does not allow forbidden pairs, then every stopping time concentrated on $\Gamma$ is necessarily non-randomized, which clearly yield the uniqueness. Indeed, let $\xi$ be any stopping time in ${\cal T}(\mu, \nu)$ with $\xi(\Gamma) = 1$. Note that as $\mu \wedge \nu =0$ and $\mu(\{0\})=0$, we can assume that every stopped path $(f^x, s) \in \Gamma$ has $s>0$ and $x\neq 0$. Now we claim that since ${\rm FP} \cap (\Gamma \times \Gamma) = \emptyset$,  $\xi$ must be of non-randomized type. 

Suppose not. Then there exists an element $(f^x, s) \in \Gamma$ such that the conditional stopping time $\xi^{(f^x, s)}$ is nonzero. This means that the Brownian motion which has followed the path $f^x$ up to time $s>0$ will continue its motion at $y := f^x(s)$. Now consider the barrier $U := U_x^y$ and note that,  Brownian motion starting at $y$ governed by any non-zero stopping time will go through the surface $U$ before its complete stop, since $y$ is on the boundary circle of $U$. This implies that there is a stopped path $(g^y, t) \in S$ such that the concatenation $(f^x\oplus g^y, s+t) \in \Gamma$ and for some $s' < s+t$, $(f^x\oplus g^y) (s') \in U$. This means that the pair $((f^x\oplus g^y, s+t), (f^x, s)) \in \Gamma \times \Gamma$ is a forbidden pair, which is a contradiction.

Therefore, every minimizer is of non-randomized type, and uniqueness follows in the usual way, that is if $\tau$ and $\tau'$ are two minimizers and if  their disintegrations do not agree, i.e. $\tau_{\omega^x} \neq \tau'_{\omega^x}$ for all $\omega^x \in B$ with $\P(B) >0$, then the stopping time  $\frac{{\tau} + {\tau'}}{2}$ must be of randomized type, which is a contradiction since it is obviously a better minimizer.

The maximization problem when $\alpha > 2$, can be proved in a similar fashion. 
\end{proof}

\begin{remark}
 Let $\Gamma$ be the radial $c$-monotone set on which the optimizer in Theorem \ref{thm:main} is concentrated. The proof of Theorem \ref{thm:main} in fact tells us that, for the minimization problem with $0 < \alpha < 2$ or the maximization problem with $\alpha > 2$, the optimal stopping time  is given by the first time Brownian motion $B^x$ hits the following union of barriers
$$\mathcal{U}_x := \cup_y U_x^y, \text{ where } y = f^x(s) \text{ for some } (f^x, s) \in \Gamma. $$
Moreover, by the uniqueness property and the radial symmetry of $\mu$ and $\nu$, the sets $\mathcal{U}_x$'s  are congruent under rotation, that is if $M$ is an orthogonal matrix and $M(x) = x'$, then $M(\mathcal{U}_x) = \mathcal{U}_{x'}$.

For minimization problem with $\alpha > 2$ or maximization problem with $0 < \alpha < 2$, we have the same type of result, but the barrier will be reversed: it will be given by
$$\mathcal{V}_x := \cup_y V_x^y, \text{ where } y = f^x(s) \text{ for some } (f^x, s) \in \Gamma, $$
where $V_x^y$ is the reversed barrier
$$V_x^y = \{z \in \R^d : |z| = |y| \text{ and } \langle x, y \rangle > \langle x, z \rangle \}.$$ 

This is due to the interchange of the superharmonic and subharmonic region of the derivative of the gain function \eqref{sh9}, according to the value of $\alpha$. Also note that when dealing with  maximization problem, the inequalities \eqref{sh5} and \eqref{sh6} in the monotonicity principles must be reversed.

 Finally, we note that the ideas in this paper can be applied to costs that are more general than the ones of the form $|x-y|^\alpha$ considered in this paper. In particular, they apply to a class of cost functions $c(x,y)$ that are invariant under rotation and whose directional derivatives $ \nabla_u c(x,y)$ in the $x$-variable have suitable sub/superharmonic regions in the $y$-variable. 
 
\end{remark}


\bibliographystyle{plain}

\begin{thebibliography}{40}

\bibitem{ap}
L. ~Ambrosio and A. ~Pratelli,
\newblock Existence and stability results in the $L^1$ theory of optimal transportation.
\newblock {\em Optimal transportation and applications. Springer Berlin Heidelberg},
Volume 1813 of the series Lecture Notes in Mathematics (2003) 123--160.



\bibitem{akp}
L. ~Ambrosio, B. ~Kirchheim, and A. ~Pratelli, 
\newblock Existence of optimal transport maps for crystalline norms.
\newblock {\em Duke Mathematical Journal},Volume 125, Number 2 (2004)  207--241.

\bibitem{BaCh74}
J. R.~Baxter and R. V.~Chacon,
\newblock  Potentials of stopped distributions.
\newblock  {\em Illinois J. Math.}, 18:649--656, (1974). MR358960
\bibitem{bch}
M.~Beiglb{\"o}ck, A.M.G.~Cox, and M.~Huesmann,
\newblock Optimal transport and Skorokhod embedding.
\newblock {\em Inventiones mathematicae}, May 2017, Volume 208, Issue 2, pp 327--400.

\bibitem{BeHePe11}
M.~Beiglb{\"o}ck, P.~{\HL}, and F.~Penkner, 
\newblock Model-independent bounds for option prices--a mass transport approach.
\newblock {\em Finance and Stochastics}, 17 (2013) 477--501.

\bibitem{BeHeTo}
M.~Beiglb{\"o}ck, P.~{\HL}, and N.~Touzi, 
\newblock Monotone martingale transport plans and Skorohod Embedding.
\newblock {\em Stochastic Processes and their Applications}, Volume 127, Issue 9, September 2017, 3005--3013.

\bibitem{BeJu16}
M.~Beiglb{\"o}ck and N.~Juillet, 
\newblock On a problem of optimal transport under marginal martingale constraints.
\newblock {\em Ann. Probab}, Volume 44, Number 1 (2016), 42--106.

\bibitem{BeNuTo17}
M.~Beiglb{\"o}ck, M.~Nutz, and N.~Touzi,
\newblock Complete duality for martingale optimal transport on the line.
\newblock  {\em Ann. Probab.}, Volume 45, Number 5 (2017), 3038--3074.



\bibitem{br}
Y.~Brenier,
\newblock Polar factorization and monotone rearrangement of vector-valued functions.
\newblock {\em Communications on pure and applied mathematics}, Volume 44, Issue 4 (1991) 375--417.

\bibitem{BuSc92}
S.~Bu and W.~Schachermayer,
\newblock Approximation of Jensen measures by image measures under holomorphic functions and applications.
\newblock {\em Trans. Amer. Math. Soc.}, 331 (1992), 585--608.

\bibitem{ChWa76}
R. V.~Chacon and J. B.~Walsh,
\newblock One-dimensional potential embedding. 
\newblock In {\em S{\'e}minaire de Probabilit{\'e}s,  X.  Lecture Notes in Math.}, Vol. 511. Springer, Berlin (1976),  19--23. MR445598.

\bibitem{Ch59}
G.~Choquet,
\newblock Forme abstraite du th{\'e}or{\`e}me de capacitabilit{\'e}.
\newblock {\em Ann. Inst. Fourier. Grenoble}, 9 (1959) 83--89. 

\bibitem{COT}
A.M.G.~Cox, J. \OB, and N.~Touzi,
\newblock The Root solution to the multi-marginal embedding problem: an optimal stopping and time-reversal approach.
\newblock http://arxiv.org/abs/1505.03169 (2015).

\bibitem{DeTo17}
H.~De March and N.~Touzi,
\newblock Irreducible convex paving for decomposition of multi-dimensional martingale transport plans.
\newblock https://arxiv.org/abs/1702.08298 (2017).

\bibitem{ds1}
Y.~Dolinsky and H.M.~Soner, 
\newblock Martingale optimal transport and robust hedging in continuous time.
\newblock {\em Probab. Theory Relat. Fields}, 160 (2014) 391--427.

\bibitem{ds2}
Y.~Dolinsky and H.M.~Soner, 
\newblock Martingale optimal transport in the Skorokhod space.
\newblock {\em Stochastic Processes and their Applications}, 125(10) (2015) 3893--3931.

\bibitem{Ed85}
G.A.~Edgar, 
\newblock Complex martingale convergence.
\newblock {\em Banach Spaces,
Volume 1166 of the series Lecture Notes in Mathematics}, 38--59.

\bibitem{Ed86}
G.A.~Edgar, 
\newblock Analytic martingale convergence.
\newblock {\em Journal of Functional Analysis},
Volume 69, Issue 2, November 1986, 268--280.

\bibitem{Fa80}
N.~Falkner,
\newblock  On Skorohod embedding in n-dimensional Brownian motion by means of natural stopping times. 
\newblock  In {\em S{\'e}minaire de Probabilit{\'e}s, XIV, volume 784 of Lecture Notes in Math.}, 357--391. Springer, Berlin, (1980). MR580142.


\bibitem{GaHeTo11}
A.~Galichon, P.~Henry-Labordere, and N.~Touzi, 
\newblock A Stochastic Control Approach to No-Arbitrage Bounds Given
  Marginals, with an Application to Lookback Options.
\newblock {\em Ann. Appl. Probab}, Volume 24, Number 1 (2014) 312--336.

\bibitem{gm}
W.~Gangbo and R.J.~McCann, 
\newblock The geometry of optimal transportation.
\newblock {\em Acta Math}, Volume 177, Issue 2 (1996) 113--161. 


\bibitem{GKL2}
N.~Ghoussoub, Y-H. ~Kim, and T. ~Lim, 
\newblock 
Structure of optimal martingale transport in general dimensions.
\newblock {\em http://arxiv.org/abs/1508.01806,} (2015).

\bibitem{GKL3}
N.~Ghoussoub, Y-H. ~Kim, and T. ~Lim, 
\newblock 
Structure of optimal subharmonic martingale transport in general dimensions.
\newblock In preparation.

\bibitem{GhMa89}
N.~Ghoussoub and B.~Maurey, 
\newblock Plurisubharmonic martingales and barriers in complex quasi-Banach spaces.
\newblock {\em Annales de l'institut Fourier}, 
Volume 39, Issue 4 (1989) 1007--1060.

\bibitem{GTT}
G.~Guo, X.~Tan, and N.~Touzi, 
\newblock 
Optimal Skorokhod embedding under finitely-many marginal constraints.
\newblock {\em SIAM J. Control Optim}, 54(4), 2174--2201.



\bibitem{Ho98}
D.~Hobson,
\newblock Robust hedging of the lookback option, 
\newblock {\em Finance and Stochastics},  2 (1998) 329--347.

\bibitem{Ho11}
D.~Hobson, 
\newblock The Skorokhod embedding problem and model-independent bounds for
  option prices.
\newblock In {\em Paris-{P}rinceton {L}ectures on {M}athematical {F}inance
  2010}, volume 2003 of {\em Lecture Notes in Math}, Springer,
  Berlin (2011)  267--318.

\bibitem{HoKl12}
D.~Hobson and M.~Klimmek, 
\newblock , Robust price bounds for the forward starting straddle.
\newblock {\em Finance and Stochastics}, Volume 19, Issue 1 (2014) 189--214.


\bibitem{HoNe11}
D.~Hobson and A.~Neuberger, 
\newblock Robust bounds for forward start options.
\newblock {\em Mathematical Finance}, Volume 22, Issue 1 (2012) 31--56.

\bibitem{Johnson}
J. A.~Johnson, 
\newblock Banach spaces of Lipschitz functions and vector-valued Lipschitz functions.
\newblock {\em Bull. Amer. Math. Soc}, Volume 75, Number 6 (1969), 1334--1338.


\bibitem{KTT}
S.~K{\"a}llblad, X.~Tan and N.~Touzi,
\newblock Optimal Skorokhod embedding given full marginals and Azema-Yor peacocks.
\newblock {\em Ann. Appl. Probab}, Volume 27, Number 2 (2017), 686--719.


\bibitem{Lim}
T.~Lim,
\newblock Optimal martingale transport between radially symmetric
marginals in general dimensions.
\newblock https://arxiv.org/abs/1412.3530, (2016).

\bibitem{Mo72}
I.~Monroe,
\newblock On embedding right continuous martingales in Brownian motion.
\newblock {\em The Annals of Mathematical Statistics}, Vol. 43, No. 4, 1293--1311 (1972).

\bibitem{Ob04}
J.~Ob{\l}{\'o}j,
\newblock  The Skorokhod embedding problem and its offspring
\newblock {\em  Probability Surveys}, 1 (2004) 321--392.

\bibitem{ObSi17}
J.~Ob{\l}{\'o}j and P.~Siorpaes,
\newblock Structure of martingale transports in finite dimensions.
\newblock https://arxiv.org/abs/1702.08433 (2017).

\bibitem{ObSp16}
J.~Ob{\l}{\'o}j and P.~Spoida,
\newblock An Iterated Az\'{e} ma-Yor Type Embedding for Finitely Many Marginals.
\newblock {\em Ann. Probab.}, to appear. (2016)

\bibitem{Rock}
R.T.~Rockafellar,
\newblock  Characterization of the subdifferentials of convex functions.
 \newblock {\em Pacific J. Math}, 17 (1966) 497--510. 
 
 \bibitem{Ro71}
 H.~Rost,
\newblock The Stopping Distributions of a Markov Process.
\newblock {\em Inventiones mathematicae}, 14, 1--16 (1971).



\bibitem{Sk65}
A.V.~Skorokhod,
\newblock  Studies in the Theory of Random Processes (Translated from the Russian
 by  Scripta Technica Inc.). 
 \newblock {\em Addison-Wesley Publishing Co. Inc.}, Reading (1965)

\bibitem{St65}
V.~Strassen,
\newblock The existence of probability measures with given marginals.
\newblock {\em Ann. Math. Statist.}, 36 (1965) 423--439. 

\bibitem{Vi03}
C.~Villani,
\newblock {\em Topics in optimal transportation}, Volume~58, {\em Graduate
  Studies in Mathematics}.
\newblock American Mathematical Society, Providence, RI, 2003.

\bibitem{Vi09}
C.~Villani,
\newblock {\em Optimal Transport. Old and New}, Vol. 338, {\em Grundlehren
  der mathematischen Wissenschaften}.
\newblock Springer, 2009.

\end{thebibliography}


\end{document}